\newcommand{\Ds}{\displaystyle}
\newcommand{\calL}{{\mathcal{L}}}
\newcommand{\ph}{\varphi}
\newcommand{\terf}{s^3\theta^3\frac{x^2}{a(x)} e^{-2s\ph(x,t)}}
\def\R{\mathbb{R}}
\newtheorem{theorem}{Theorem}
\newtheorem{lemma}{Lemma}[section]
\newtheorem{remark}{Remark}[section]
\begin{document}

\title[Degenerate parabolic equations:control]{Null controllability of one dimensional degenerate parabolic equations with first order terms}
\author{J. Carmelo Flores \and Luz de Teresa}

\address{J. Carmelo Flores \newline
 Academia de Matem\'aticas  \\
Universidad  Aut\'onoma de la Ciudad  de M\'exico\\
   	Avenida la Corona 320, Col. Loma la Palma,\\
Del. Gustavo A. Madero, M\'exico D.F., 
C.P. 07160. \\
 Mexico}
\email{cflores@matem.unam.mx}

\address{Luz de Teresa \newline
Instituto de Matem\'aticas  \\
Universidad Nacional Aut\'onoma de M\'exico\\
Circuito Exterior, C.U. \\
C. P. 04510 D.F., Mexico}
\email{ldeteresa@im.unam.mx}
\subjclass[2000]{35K65, 93C20}
\keywords{degenerate parabolic systems, controllability}

\begin{abstract}
In this paper we present a null controllability result for a degenerate semilinear parabolic equation with first order terms. The main result is obtained after the proof of a new Carleman inequality for a degenerate linear parabolic equation with first order terms.
\end{abstract}
\date{}
\maketitle

\section{Introduction and main result}
\medskip
In this paper we are interested in null controllability properties of a degenerate semilinear parabolic equation. We consider 
$   a \in   C[0,1],\; a > 0 \;\textrm{in}\; (0,1],\;\; a(0)=0$. Let us fix $T>0$ and a non-empty open subset $\omega \subset (0,1)$. 
The degenerate parabolic equation we want to analyze is: 
\begin{equation}\label{e1}
    \left\{ \begin{array}{ll} y_t-  (a(x)y_{x})_{x}  +f(x,t, y, y_x)  =h1_{\omega} & \mbox{in}\;\;Q, \\
    \noalign{\smallskip}
    y(1,t)=0  \quad 
    \mbox{ and } \left\{ \begin{array}{ll} y(0,t)=0 & \mbox{for (WDP)},\\
    (ay_{x})(0,t)=0 &\mbox{ for (SDP)},
    \end{array}\right. & t \in (0,T),\\
      y(0,t)=y(1,t)=0   & t \in (0,T),\\
    y(x,0)= y_{0}(x), & \mbox{in} \;\; (0,1).
\end{array} \right.
\end{equation}
  Here, $h \in L^2(Q)$ is the control function to be determined, $1_\omega$ the characteristic function of the set  $\omega$, $y_0\in L^2(0,1)$ and $f$ is a globally Lipschitz function. The boundary conditions, weak degenerate problem (WDP), or strong degenerate problem (SDP), depend on the behavior
  of $a$ close to $x=0$. 
  
  On one hand, we  consider that the problem is weakly degenerate (WDP) if
 \begin{equation}\label{H1}
\begin{array}{ll}
\mbox{(i)} &   a \in C[0,1]\cap C^{1}(0,1],\; a > 0 \;\textrm{in}\; (0,1],\;\; a(0)=0,\\
\mbox{(ii)} &  \exists \;K \in [0,1)\; \mbox{such that}\; xa'(x)\leq Ka(x)\;\; \forall \; x \in [0,1]. 
\end{array}
\end{equation}
Here we consider Dirichlet boundary conditions $y(0)=0$. 
Notice that, under these assumptions,  $x^{K}/a(x)$ is not decreasing and then, since $0\leq K<1$,  $\frac{1}{\sqrt{a}}, \frac{1}{a} \in L^{1}(0,1)$. 

On the other hand, when the problem is strongly degenerate (SDP),  we assume
\begin{equation}\label{H2}
\begin{array}{ll}
\mbox{(i)}  &  a \in C^{1}(0,1],\; a > 0 \;\textrm{in}\; (0,1],\;\; a(0)=0,\\
\mbox{(ii)} & \exists \;K \in [1,2)\; \mbox{such that }\; xa'(x)\leq Ka(x)\;\; \forall \; x \in [0,1],\\
\mbox{(iii)} & \left\{ \begin{array}{ll}
\exists \;  \sigma \in (1,K] \; x \rightarrow \frac{a(x)}{x^{\sigma}}\; \mbox{is not decreasing close to }\; 0,  & \mbox{if} \;K>1, \\
\exists \; \sigma \in (0,1) \; x \rightarrow \frac{a(x)}{x^{\sigma}}\; \mbox{is not decreasing close to}\; 0,   & \mbox{if}  \;K=1,
\end{array}\right. 
\end{array}
\end{equation} 
the natural boundary condition at $x=0$ will be of  Neumann type:
\[ (au_{x})(0,t)= 0, \quad t \in (0,T). \]
We observe that we cannot deduce that $\frac{1}{a} \in L^1(0,1)$, however   $\frac{1}{\sqrt{a}} \in L^{1}(0,1)$, as a consequence of  ~\eqref{H2}(ii). For details, see \cite{ACF}.
 
 Our aim is to give conditions on $f$ in such a way that system \eqref{e1} is null controllable. That is,  give $H$ a Hilbert space such that for any $y^0\in H$ it exists $h\in L^2(\omega \times (0,T))$ such that the corresponding solution to \eqref{e1} satisfies
 \begin{equation}\label{cont0}
 y(T)=0.
  \end{equation}
  
In order to present our main result we need to introduce the Hilbert spaces $H_{a}^1(0,1)$ where problem is well posed:\\

\noindent WDP CASE 
\[\begin{array}{ll}
\displaystyle
 H_{a}^1(0,1) := \{ u\in L^2(0,1)  |\  u \mbox{ absolutely continuous in } \; [0,1], \\
 \displaystyle
\sqrt{a} u_{x}\in L^2(0,1)\;\mbox{and }\;
u(0)=u(1)=0 \},
\end{array}
\]
and
$$
H_{a}^{2}(0,1) := \left\{ u \in H_{a}^1(0,1)\; | \; au_{x} \in
H^{1}(0,1)\right\}.
   $$

 \noindent SDP CASE 
\[ \begin{array}{ll}
\displaystyle
 H_{a}^1(0,1) := \{ u\in L^2(0,1)  |\  u\; \mbox{locally absolutely continuous in} \; (0,1],\\ 
 \displaystyle
 \sqrt{a} u_{x} \in L^2(0,1) \;\;\mbox{ and }\;\; u(1)=0 \},
\end{array}
\]

   and

\[ \begin{array}{ll} \displaystyle
H_{a}^{2}(0,1):= \{ u \in L^2(0,1) \;\;|\;\;  u \; \mbox{locally absolutely continuous in}\; (0,1],\\
\displaystyle
\hspace{2.2cm}  au \in H^{1}_{0}(0,1),au_{x} \in
H^{1}(0,1) \;\mbox{and} \; (au_{x})(0)=0 \}.
\end{array}  \]
with norms
\[ \| u \|_{H_{a}^{1}}^{2} := \| u \|_{L^2(0,1)}^{2} + \| \sqrt{a}u_{x} \|_{L^2(0,1)}^{2}, \;\; \mbox{ and }\;\;  \| u \|_{H_{a}^{2}}^{2} := \| u \|_{H_{a}^{1}}^{2} + \| (au_{x})_{x} \|_{L^2(0,1)}^{2} \]

  In the last fifteen years the study of null controllability properties of degenerate parabolic equations has been intense. See e.g.~\cite{ACF}, \cite{CMVII}, \cite{CMVI} , \cite{FAF-C} and  \cite{MV}. In these papers the authors studied null controllability properties of linear degenerate one dimensional equations without first order terms. That is $f$ is of the form $f(x,t,y,y_x) = b(x,t) y$. In ~\cite{canfraroch}, the authors studied the null controllability properties when $f(x,t,y,y_x) = b(x,t) y+ c(x)y_x$.   
    The results we are presenting here are the generalization of the ones  presented in \cite{FdeT} in which  $a(x)=x^\alpha$ and $f$ is linear. 
  
  In order to present our result we need to introduce some hypothesis on $f$. To this end  fix a function $\beta (x)$ such that 
  \begin{equation}\label{H3}
  \frac{\beta(x)}{x}\in L^\infty (0,1).
  \end{equation} 
  
\noindent{\bf{Assumptions  A}} {\em Let  $f\colon [0,1] \times [0,T] \times \R \times \R \rightarrow \R$ be such that }
\begin{enumerate}[label=\alph*)]
\item $f(x,t;0,0)=0 \quad \forall\; (x,t) \in Q.$
\item $f(\cdot ;s,p)\in L^{\infty}(Q) \quad \forall\; (s,p)  \in \R^{2}.$
\item $f(x,t; \cdot)$ is globally Lipschitz  for every $(x,t) \in Q$ with    Lipschitz constant independent of   $(x,t)$.
\item $f(\cdot ; s,p) = g(\cdot ; s,p) + G(\cdot ; s,p)p$ for every $(s,p)  \in \R^{2}$, where
$ g(\cdot;s,p)\in L^{\infty}(Q), \; \forall (s,p)\in \R^{2} \;\; \mbox{ and } \;\;$
\[ \biggl|\frac{G(x,t;s,p)}{\beta(x)}\biggr| \leq C \quad \mbox{ almost everywhere } \;\; (x,t) \in Q,\;  \forall \; (s,p)  \in \R^{2}.\]
\end{enumerate}
  
 Our main result is:
 \begin{theorem}\label{T1}
Given  $T>0$ and $y_{0} \in L^2(0,1)$. Let us assume    ~\eqref{H1} (in the WDP) or ~\eqref{H2} (in the SDP), \eqref{H3}, and assumptions A. Then, system~\eqref{e1} is null controlable, that is, it exists $h \in
L^2(\omega \times (0,T))$ such that the solution  $y$ to~\eqref{e1} satisfies
\begin{equation}\label{null}
y(x, T) = 0 \;\;\mbox{for every }\;\; x \in [0,1].
\end{equation}
 Moreover, it exists a constant $C>0$ only depending on  $T$, such that
\[ \int_{0}^T \int_{\omega}|h|^2 dx dt \leq C \int_{0}^1 y_{0}^2(x)
dx.
\]
\end{theorem}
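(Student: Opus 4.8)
The plan is to follow the classical two-step strategy for null controllability of semilinear parabolic problems: first prove the result for the associated linearized equation through an observability inequality, and then recover the nonlinear statement by a fixed-point argument. The globally Lipschitz hypothesis together with $f(x,t;0,0)=0$ and the splitting $f=g+Gp$ in Assumptions~A will allow us to write, for any fixed reference state $z$, the nonlinearity as $f(x,t,y,y_x)=b_z(x,t)\,y+c_z(x,t)\,y_x$, where $b_z\in L^\infty(Q)$ is bounded uniformly by the Lipschitz constant and, by item d), $|c_z(x,t)|\le C\,\beta(x)$. Hypothesis \eqref{H3}, $\beta(x)/x\in L^\infty(0,1)$, then yields the pointwise bound $|c_z(x,t)|\le C\,x$, which is exactly what makes the first order term a harmless lower order perturbation in the degenerate Carleman machinery.

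First I would establish the Carleman inequality announced in the abstract for the adjoint of the linearized problem, $-w_t-(a(x)w_x)_x=F$, with the boundary conditions dual to those in \eqref{e1} (Dirichlet at $x=0$ in the WDP case, $(aw_x)(0,t)=0$ in the SDP case). Using a degenerate weight $\ph(x,t)=\theta(t)\psi(x)$, with $\theta$ blowing up as $t\to 0^+,T^-$ and $\psi$ built from $\int_0^x \frac{s}{a(s)}\,ds$, the estimate should bound the dominant term $\intdoble \terf |w|^2$ together with $\intdoble s\theta\,a\,|w_x|^2 e^{-2s\ph}$ by the weighted $L^2$ norm of the forcing plus a localized term on $\omega$. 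The first order contribution, whose weighted norm involves $\intdoble |c_z|^2|w_x|^2 e^{-2s\ph}$ and (after integration by parts) a multiple of $\intdoble \frac{x^2}{a(x)}|w|^2 e^{-2s\ph}$, is controlled via $|c_z|\le C\,x$ and $x^2\le \|a\|_\infty\,\frac{x^2}{a(x)}$ by the two leading terms of the estimate once $s$ is large; this absorption is precisely what \eqref{H3} guarantees.

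From the Carleman inequality I would then deduce, in the usual manner (splitting the time interval, invoking the dissipation of the adjoint equation and the explicit behaviour of $\theta$ near $t=0$), the observability inequality $\|w(\cdot,0)\|_{L^2(0,1)}^2\le C\int_0^T\!\!\int_\omega |w|^2\,dx\,dt$, with a constant $C$ \emph{independent of the reference} $z$, since the potentials $b_z,c_z$ range over a fixed bounded set. By the Hilbert Uniqueness Method (equivalently, a penalized least-squares duality argument) this observability is equivalent to the null controllability of the linearized equation and produces a control $h_z$ obeying the uniform bound $\int_0^T\!\!\int_\omega |h_z|^2\le C\|y_0\|_{L^2(0,1)}^2$.

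The nonlinear statement finally follows from a fixed-point theorem. I would define the map $\Lambda$ sending a reference $z$ to the controlled state $y$ solving the linearized problem with potentials $b_z,c_z$ and the control of minimal norm; any fixed point solves \eqref{e1}, satisfies \eqref{null}, and inherits the announced estimate. The uniform control bound yields a closed, bounded, convex invariant set, while the regularizing effect of the degenerate semigroup together with the Aubin--Lions lemma provides the compactness needed to apply a Schauder--Kakutani--Leray type theorem. I expect the genuinely new and hardest part to be the degenerate Carleman inequality with a first order term: the delicate issues are choosing weights adapted simultaneously to the WDP and SDP regimes, rigorously handling the boundary terms at the degenerate endpoint $x=0$, and carrying out the absorption of the first order perturbation, for which \eqref{H3} is the sharp requirement.
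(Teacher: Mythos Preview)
Your overall architecture---Carleman estimate for the adjoint, observability, linear null controllability by duality, then a fixed-point argument---is exactly the paper's strategy. The genuine difference is in how the Carleman with a first order term is obtained.

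You propose to apply a basic degenerate Carleman to the adjoint and absorb the first order contribution directly. But the adjoint of $y\mapsto \beta(x)c(x,t)y_x$ is $w\mapsto -(\beta c\,w)_x$, a term in \emph{divergence form}; since no regularity on $(\beta c)_x$ is assumed, you cannot simply treat the right-hand side as an $L^2$ function $F$ and bound $\int F^2 e^{-2s\varphi}$. Your parenthetical ``after integration by parts'' points in the right direction, but the paper implements this systematically via the Imanuvilov--Yamamoto duality technique: it first proves a standard degenerate Carleman for $v_t+(a v_x)_x=F$ (Lemma~\ref{aux.Carleman}), then uses that estimate to solve two auxiliary null-controllability problems (Theorem~\ref{cont.cero.aux}), and from these derives by duality a Carleman for right-hand sides of the form $F_0+(\beta F_1)_x$ (Theorem~\ref{t.Carleman}), with the weighted term $\int s^2\theta^3\frac{\beta^2}{a}F_1^2\,e^{-2s\varphi}$ appearing on the right. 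Applied with $F_0=bv$ and $F_1=-cv$, absorption succeeds precisely because $\frac{\beta^2}{a}\le C\frac{x^2}{a}$ by~\eqref{H3}. Your direct route can probably be completed by carrying the divergence term through the multiplier computation, but the paper's duality method is designed to sidestep exactly this difficulty and yields the needed ``negative-order'' Carleman cleanly.

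For the fixed point the paper uses a Kakutani-type theorem with a multi-valued map $\Lambda(z)$ (all null-controlled trajectories obeying the uniform bound), first assuming $y_0\in H^1_a(0,1)$ so that the compact embedding $H^2_a\hookrightarrow H^1_a$ and Aubin--Lions give compactness, and then reaches $y_0\in L^2(0,1)$ by the smoothing effect (take $h\equiv 0$ on an initial layer). Your single-valued Schauder scheme with the minimal-norm control is a legitimate variant of the same step.
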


\begin{remark}
Observe that the assumptions on  $\beta(x)$, guarantee that
\begin{equation}\label{condicion.beta}
\frac{\beta^{2}(x)}{a(x)} \leq C \frac{x^{2}}{a(x)} \leq \frac{C}{a(1)} \;\; \mbox{a.e.}\; x \in (0,1).
\end{equation}

\end{remark} 
The aim of this paper is to prove Theorem \ref{T1}. To this end we study the linearized degenerate parabolic equation and with a fixed point argument we obtain our main result. The rest of the paper is organized as follows. In the next chapter we present  the linear problem. We prove a new Carleman inequality for the adjoint system associated with the linear one. In section 3, we prove   our main result.

\section{Linear problem}

In this section we study the null controllability of the degenerate linear equation
 \begin{equation}\label{e1-1}
    \left\{ \begin{array}{ll} y_t -  (a(x)y_{x})_{x} + b(x,t)y + \beta(x)c(x,t)y_{x}    = h1_{\omega} & \mbox{in}\;\;Q , \\
    \noalign{\smallskip}
    y(1,t)=0  \quad 
    \mbox{ and } \left\{ \begin{array}{ll} y(0,t)=0 & \mbox{for (WDP)},\\
    (ay_{x})(0,t)=0 &\mbox{for (SDP)},
    \end{array}\right. & t \in (0,T),\\
    y(x,0)= y_{0}(x), & \mbox{in} \;\; (0,1),
\end{array} \right.
\end{equation}

The following existence and uniqueness results of  solutions to \eqref{e1-1} is well known, see e.g.  ~\cite{Campiti},

\begin{theorem}\label{t.regularidad}
Suppose  that $b, c\in L^\infty ( Q)$, $a $ satisfies \eqref{H1}  (in the (SDP)) or \eqref{H2} (in the (SDP)) and $\beta$ satisfies \eqref{H3}, $h\in L^{2}(\omega \times (0,T))$, then for every $y_{0} \in L^{2}(0,1)$,
~\eqref{e1-1} has a unique solution 
\[y \in {\mathcal U}: = C([0,T]; L^2(0,1)) \cap L^2(0,T;
H_{a}^1(0,1)).\] Moreover, if  $y_{0} \in H_{a}^1(0,1)$, then
\[y \in  C([0,T];H_{a}^1(0,1)) \cap L^2(0,T;
H_{a}^{2}(0,1))\cap H^{1}(0,T; L^{2}(0,1)),\]
and it exists a positive $C_{T}$ such that 
\begin{equation}\label{d.regularidad}
\begin{array}{ll}
\|y\|_{C([0,T];H_{a}^1(0,1))}^{2} +\|y\|_{L^2(0,T;H_{a}^{2}(0,1))}^{2} +\|y\|_{H^{1}(0,T; L^{2}(0,1))}^{2}\\
   \noalign{\smallskip}
\leq \;C_{T} (\|y_{0}\|_{H_{a}^1(0,1))}^{2}  +\|h\|_{L^{2}(\omega \times (0,T)}^{2}).
\end{array}
\end{equation}
\end{theorem}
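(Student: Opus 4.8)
The plan is to treat \eqref{e1-1} as a lower-order perturbation of the purely degenerate diffusion problem, for which generation of an analytic semigroup is already available in the literature (this is the content invoked through \cite{Campiti} and \cite{ACF}). Concretely, I would set $A u := (a u_x)_x$ with domain $D(A) = H_a^2(0,1)$, and recall that under \eqref{H1} (WDP) or \eqref{H2} (SDP) the operator $A$ is self-adjoint, non-positive and has compact resolvent on $L^2(0,1)$, with $-A$ associated to the continuous, $H_a^1$-coercive form $q(u,v) = \int_0^1 a u_x v_x\,dx$. The remaining terms define the time-dependent operator $B(t) u := b(\cdot,t)\,u + \beta\, c(\cdot,t)\, u_x$. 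Its first summand is bounded on $L^2$ since $b \in L^\infty(Q)$. For the delicate first-order summand I would use precisely the structural bound \eqref{condicion.beta}: writing $\beta c u_x = \tfrac{\beta}{\sqrt a}\, c\,\sqrt a\, u_x$ and noting $\beta/\sqrt a \in L^\infty$ by \eqref{condicion.beta} and $c \in L^\infty$, one gets $\|\beta c u_x\|_{L^2} \le C\,\|\sqrt a\, u_x\|_{L^2} \le C\,\|u\|_{H_a^1}$. Thus $B(t)$ is subordinate to the form domain of $A$, placing \eqref{e1-1} within the standard theory of parabolic problems governed by a time-dependent coercive form; existence and uniqueness of a weak solution $y \in \mathcal{U} = C([0,T];L^2(0,1))\cap L^2(0,T;H_a^1(0,1))$ follow.

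To obtain the quantitative estimate \eqref{d.regularidad}, I would run a Galerkin scheme in the eigenbasis of $A$ (available since $A$ has compact resolvent) and derive energy identities on the finite-dimensional approximations, passing to the limit at the end. For the basic estimate, multiply the equation by $y$ and integrate over $(0,1)$; the boundary contributions vanish because for $u \in H_a^2(0,1)$ one has $a u_x \in H^1(0,1) \hookrightarrow C[0,1]$, combined with $y(1)=0$ and with $y(0)=0$ (WDP) or $(ay_x)(0)=0$ (SDP). This leaves $\tfrac12\tfrac{d}{dt}\|y\|_{L^2}^2 + \int_0^1 a y_x^2\,dx = -\int_0^1 b y^2 - \int_0^1 \beta c y_x y + \int_\omega h y$. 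Here $\int b y^2$ is bounded by $\|b\|_\infty\|y\|_{L^2}^2$, while the first-order term is controlled by the same factorization and Young's inequality, $|\int \beta c y_x y| \le \tfrac12\int a y_x^2 + C\int y^2$, absorbing half of the diffusion. Gronwall's lemma yields the $C([0,T];L^2)$ bound and, upon integrating in time, the $L^2(0,T;H_a^1)$ bound. For the higher regularity with $y_0 \in H_a^1(0,1)$, I would instead multiply by $y_t$, absorb $\int b y y_t$ and $\int \beta c y_x y_t$ into $\tfrac12\int y_t^2$ via Young, and read off the $H^1(0,T;L^2)$ and $C([0,T];H_a^1)$ bounds; the $L^2(0,T;H_a^2)$ bound is then immediate from the equation, since $(a y_x)_x = y_t + b y + \beta c y_x - h1_\omega$ has right-hand side in $L^2(Q)$.

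I expect the main obstacle to be the rigorous justification of the integrations by parts in the degenerate setting, that is, that the boundary fluxes at the degeneracy point $x=0$ genuinely vanish and that the weighted spaces $H_a^1(0,1)$ and $H_a^2(0,1)$ support density and trace arguments. In the WDP case this rests on the Hardy-type inequalities underlying the coercivity of $q$ and the meaning of $u(0)=0$, while in the SDP case the Neumann-type condition $(a u_x)(0)=0$ together with $1/\sqrt a \in L^1(0,1)$ from \eqref{H2}(ii) must be used to control the flux. The secondary difficulty is the first-order term $\beta c y_x$: absent \eqref{condicion.beta}, it could not be absorbed into the diffusion and the problem would leave the class of form-subordinate parabolic equations. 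It is exactly the hypothesis $\beta/x \in L^\infty(0,1)$ of \eqref{H3}, transferred through \eqref{condicion.beta} into $\beta/\sqrt a \in L^\infty(0,1)$, that keeps the perturbation subordinate to $A$ uniformly in each of the energy estimates above.
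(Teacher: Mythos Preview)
The paper does not actually supply a proof of this theorem: it simply states the result as ``well known'' and refers the reader to \cite{Campiti}. So there is no in-paper argument to compare against.

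Your proposal is a faithful reconstruction of the standard route that the cited literature (\cite{Campiti}, \cite{ACF}) takes: semigroup generation for the unperturbed degenerate operator $Au=(au_x)_x$ with domain $H_a^2(0,1)$, form-subordination of the lower-order term $B(t)u=bu+\beta c\,u_x$ via the factorization $\beta c u_x=(\beta/\sqrt a)\,c\,\sqrt a\,u_x$ and \eqref{condicion.beta}, and then the two-tier energy estimate (test with $y$, then with $y_t$) carried out rigorously through a Galerkin scheme in the eigenbasis of $A$. The points you flag as delicate---vanishing of the boundary flux at $x=0$ in each of the WDP/SDP regimes, and the role of \eqref{H3} in keeping $\beta c\,u_x$ subordinate to the diffusion---are exactly the ones that have to be checked, and the references you invoke handle them. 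In short, your sketch is correct and is precisely the argument the paper is outsourcing; there is nothing for you to adjust beyond perhaps citing \cite{ACF} alongside \cite{Campiti} for the weighted-space density and trace facts you need in the degenerate setting.
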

  
  It is by now well understood the null controllability properties of system \eqref{e1-1}  can be characterized in terms of the adjoint system 
  \begin{equation}\label{adjunto1}
    \left\{ \begin{array}{ll} v_t +  (a(x)v_{x})_{x}  - b(x,t)v + (\beta(x) c(x,t)v)_{x}   =0 & \mbox {in}\;\;Q, \\
    \noalign{\smallskip}
    v(1,t)=0   \quad
    \mbox{ and } \left\{ \begin{array}{ll} v(0,t)=0 & \mbox{for (WDP)},\\
    (av_{x})(0,t)=0 &\mbox{for (SDP)},
    \end{array}\right. & t \in (0,T),\\
    v(x,T)= v_{T}(x), & \mbox{in} \;\; (0,1).
\end{array} \right.
\end{equation}
  
So the aim of this section is to prove the following observability inequality:
\begin{theorem}\label{T2}  
It exists a constant $C>0$ only depending on $T>0$ such that for every solution to \eqref{adjunto1} the following holds:
\begin{equation} \label{observability}
\|v(0)\|_{L^2(0,1)}\leq C\int_0^T\!\!\int_\omega |v|^2dxdt
\end{equation}
\end{theorem}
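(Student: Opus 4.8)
The plan is to deduce the observability inequality \eqref{observability} from a global Carleman estimate for the adjoint system \eqref{adjunto1}, exploiting the standard duality between observability and Carleman inequalities for parabolic problems, but with weights adapted to the degeneracy at $x=0$ and with a careful treatment of the first order term $(\beta c\, v)_x$. The heart of the argument is thus to prove that there exist $C,s_0>0$ and weight functions $\varphi(x,t)=\theta(t)\psi(x)$, with $\theta(t)=[t(T-t)]^{-4}$ blowing up as $t\to 0,T$ and $\psi$ a negative spatial profile built from $\int_0^x \frac{y}{a(y)}\,dy$, such that every solution $v$ of \eqref{adjunto1} satisfies, for $s\geq s_0$,
\[
\int_0^T\!\!\int_0^1\Bigl(s\theta a\, v_x^2 + s^3\theta^3\tfrac{x^2}{a}v^2\Bigr)e^{-2s\varphi}\,dx\,dt \;\leq\; C\int_0^T\!\!\int_\omega v^2\,dx\,dt.
\]
To obtain this I would set $z=e^{-s\varphi}v$, split the conjugated operator $e^{-s\varphi}(\partial_t+(a\partial_x)_x)e^{s\varphi}$ into its self-adjoint part $P^+$ and its skew-adjoint part $P^-$, and expand $\|P^+z\|^2+\|P^-z\|^2+2\langle P^+z,P^-z\rangle=\|Pz\|^2$. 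The cross term $\langle P^+z,P^-z\rangle$, after integration by parts in $x$ and $t$, yields the two positive distributed terms on the left together with boundary contributions at $x=0,1$.

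The decisive step is the treatment of the degenerate endpoint and of the lower order terms. The profile $\psi$ must be chosen so that the boundary terms generated at $x=0$ carry the correct sign, or vanish, under the Dirichlet condition of the (WDP) and the Neumann condition $(av_x)(0)=0$ of the (SDP); the hypotheses \eqref{H1} and \eqref{H2} on $xa'(x)\leq Ka(x)$ are exactly what make these terms controllable. The zeroth order term $bv$ and, above all, the first order term $(\beta c\,v)_x=\beta' c\,v+\beta c'\,v+\beta c\,v_x$ must then be moved to the right and absorbed into the distributed left-hand side for $s$ large. Here the term $\beta c\,v_x$ is controlled through $\int \beta^2 c^2 v_x^2\,e^{-2s\varphi}\leq \frac{C}{a(1)}\int a\,v_x^2\,e^{-2s\varphi}$, which is precisely the content of the Remark \eqref{condicion.beta}, and this absorption is the genuinely new ingredient relative to the case without first order terms. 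Finally, introducing a cutoff $\chi$ supported in $\omega$ and using a Caccioppoli inequality to bound the local gradient term $\int_0^T\!\int_{\omega_0} s\theta a\, v_x^2\,e^{-2s\varphi}$ by $C\int_0^T\!\int_\omega v^2$ over some $\omega_0\subset\subset\omega$ produces the estimate displayed above.

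To pass from the Carleman estimate to \eqref{observability}, I would restrict attention to the subinterval $[T/4,3T/4]$, where $\theta$ is bounded and $e^{-2s\varphi}$ is bounded below by a positive constant, so that the left-hand side controls $\int_{T/4}^{3T/4}\!\int_0^1(a\,v_x^2+\tfrac{x^2}{a}v^2)\,dx\,dt$. A weighted Poincaré inequality in $H_a^1(0,1)$, available because $1/\sqrt a\in L^1(0,1)$ under \eqref{H1}--\eqref{H2}, then gives $\int_{T/4}^{3T/4}\|v(t)\|_{L^2(0,1)}^2\,dt\leq C\int_0^T\!\int_\omega v^2$. On the other hand, a direct energy computation using the boundary conditions and $\beta^2/a\leq C$ yields $\frac{d}{dt}\|v(t)\|_{L^2}^2\geq -C\|v(t)\|_{L^2}^2$, hence $\|v(0)\|_{L^2}^2\leq e^{CT}\|v(t)\|_{L^2}^2$ for every $t\in[0,T]$; integrating this over $[T/4,3T/4]$ and combining with the previous bound proves \eqref{observability}.

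The main obstacle I expect is the second step: simultaneously controlling the boundary terms at the degenerate endpoint under both the weak and the strong degeneracy regimes and absorbing the first order term $(\beta c\,v)_x$ into the Carleman left-hand side. The latter is feasible only thanks to hypothesis \eqref{H3} and the resulting inequality \eqref{condicion.beta}, which tie the size of $\beta(x)$ to the scale of $x$, and hence of $\sqrt{a(x)}$, near the degeneracy.
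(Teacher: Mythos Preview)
Your overall architecture---a degenerate Carleman estimate followed by an energy inequality and a weighted Poincar\'e inequality on $[T/4,3T/4]$---matches the paper's strategy exactly, and your final passage from the Carleman estimate to \eqref{observability} is essentially the paper's argument. The gap is in how you propose to handle the first order term.

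You write $(\beta c\,v)_x=\beta' c\,v+\beta c'\,v+\beta c\,v_x$ and then explain only how to absorb $\beta c\,v_x$. But the hypotheses give you nothing more than $\beta/x\in L^\infty(0,1)$ and $c\in L^\infty(Q)$: neither $\beta'$ nor $c_x$ is assumed to exist, let alone to be bounded, so the terms $\beta' c\,v$ and $\beta c'\,v$ simply cannot be estimated. Equivalently, if you place $F=bv-(\beta c v)_x$ on the right of the basic Carleman inequality (your $P^+/P^-$ computation), you must control $\int e^{-2s\varphi}|(\beta c v)_x|^2$, and there is no way to do this under the stated assumptions. The inequality \eqref{condicion.beta} that you invoke governs the size of $\beta^2/a$, not the regularity of $\beta$, and it does not save this step.

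The paper avoids this obstruction by a genuinely different device, the Imanuvilov--Yamamoto duality technique. One first proves your $P^+/P^-$ Carleman estimate only for the pure operator $v_t+(av_x)_x=F$ with $F\in L^2(Q)$ (Lemma~\ref{aux.Carleman}); from it one solves two auxiliary null controllability problems (Theorem~\ref{cont.cero.aux}) with source terms $s^3\theta^3\tfrac{x^2}{a}e^{-2s\varphi}v$ and $s\theta(e^{-2s\varphi}\sqrt{a}\,f)_x$, together with weighted bounds on the controlled states and their gradients. Testing these auxiliary states against $v$ and integrating by parts moves the derivative in $(\beta F_1)_x$ onto the auxiliary state, never onto $\beta$ or $F_1$, and yields the Carleman inequality \eqref{e.Carleman} with a right-hand side involving only $\int e^{-2s\varphi}F_0^2$ and $\int s^2\theta^3\tfrac{\beta^2}{a}e^{-2s\varphi}F_1^2$. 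Applying this with $F_0=bv$ and $F_1=-cv$ one absorbs the latter into the left via $\beta^2\le Cx^2$ and $s^2\ll s^3$, with no differentiation of $\beta$ or $c$ required. This duality step is the essential new ingredient you are missing.
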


In order to prove inequality \eqref{observability} we will need to prove a new Carleman inequality for system \eqref{adjunto1}. This is done in the next subsection.

   
\subsection{Carleman estimates for degenerate parabolic equations}
Our main result is a consequence of a Carleman inequality. We introduce some functions:
 
We define
 \begin{equation}\label{def-psi}
\psi(x) = c_{1}\biggl(c_2-\int_{0}^{x} \frac{y}{a(y)}dy   \biggr),\;\; \forall x \in [0,1] 
\end{equation}
If  $c_{1}>0$ and  $c_{2} >  \frac{1}{a(1)(2-K)}$ (where $K$ is the constant given in \eqref{H1} and \eqref{H2} (ii)  according to the WDP or the SDP), then,  
\[ \psi(x) >0, \]
for every  $x \in [0,1]$. In fact, assumptions \eqref{H1} and \eqref{H2} (ii)  implies that it exists   $K \in [0,2)$ such that \[ \frac{1}{a(\tau)} \leq \frac{1}{a(1)\tau^{K}}, \]
for every $\tau\in(0,1)$. Then, 
\[
\int_{0}^{x} \frac{\tau}{a(\tau)}\;d\tau \;\;\leq \;\; \int_{0}^{x}\frac{\tau^{1-K}}{a(1)}\;d\tau \;\;= \;\; \frac{1}{a(1)(2-K)}<c_{2}
,
\]
for every $x\in [0,1]$. 
 
For $\omega =(a,b)$  let us call $ \kappa^-=\frac{2a+b}{3}$,
$\kappa^+=\frac{a+2b}{3}$,
and let $\xi\in C^2(\mathbb{R})$ be such that $0\leq \xi\leq 1$  and
$$
\xi (x) =\begin{cases}
1 &\text{if } x\in (0,\kappa^- )\\
0 &\text{if } x\in(\kappa^+,1).
\end{cases}
$$
 
 Now, for $\omega'=(a',b')\subset\subset (\kappa^-,\kappa^+)\subset\subset \omega$ we will consider the function given in \cite{FI} for proving Carleman inequalities for a non degenerate parabolic equation. That is, we take $\rho(x)\in C^2[0,1]$ such that 
 $$\rho(x) > 0, \ x\in (0,1); \rho(0)=\rho (1)=0 \text{ and } \rho_x\not =0\text{ in } (0,a')\cup(b',1).$$
 
 We define now $\psi (x)=e^{2\lambda \|\rho\|_\infty}-e^{\lambda \rho (x)}$ and 
  $$\eta(x)=\phi(x)\xi(x)+(1-\xi(x))\psi(x).$$
 Observe that
 $$\eta'(x)=\phi'(x)\xi(x)+\phi(x)\xi'(x) -\xi'(x)\psi(x)+(1-\xi(x))\psi'(x).$$
 So for $x\in (\kappa^+, b)$,  
 $\eta'(x)=\psi'(x)\not=0$.
 We define $$\varphi (x,t)=\eta(x)\theta (t),$$ 
with
$$  \theta(t)=\frac{1}{(t(T-t))^4} \quad\forall t\in(0,T), $$
 
 Let us consider 
 \begin{equation}\label{e2-i}
    \left\{ \begin{array}{ll} v_t +  (a(x)v_{x})_{x} = F_{0} + (\beta(x) F_{1})_{x} & \mbox{in}\;\;Q , \\
    \noalign{\smallskip}
    v(1,t)=0  \quad 
    \mbox{ and } \left\{ \begin{array}{ll} v(0,t)=0 & \mbox{for the (WDP)},\\
    (av_{x})(0,t)=0 &\mbox{for the (SDP)},
    \end{array}\right. & t \in (0,T),\\
    v(x,T)= v_{T}(x), & \mbox{in} \;\; (0,1),
\end{array} \right.
\end{equation}
with  $F_{0}, F_{1} \in L^2(Q)$ and $v_{T} \in L^2(0,1)$
 
Our main result in this subsection is the following:
\begin{theorem}\label{t.Carleman} Suppose that $\omega =(a,b)$ with $a>0$. Assume that 
 ~\eqref{H1} or ~\eqref{H2}  and \eqref{H3} are verified and let  $T>0$ be given.  Then, there exists two positive constants $C$ and $s_0$, such that every solution $v$ to~\eqref{e2-i} satisfies
 \begin{equation}\label{e.Carleman}
 \begin{array}{l}
{\displaystyle \int_{0}^{T}\!\!\! \int_{0}^{1} \biggl( s\theta a(x)
v_{x}^{2} + s^{3}\theta^{3} \frac{x^{2}}{a(x)} v^{2} \biggr)
e^{-2s\varphi(x,t)}dx dt}
\\  {\displaystyle \leq \;\; C \biggl( \int_{0}^T \!\!\!\int_{\omega}e^{-2s\varphi(x,t)}v^{2} dx dt +
\int_{0}^{T}\!\!\!\int_{0}^{1} \biggl( F_{0}^{2} + s^{2} \theta^{3} \frac{\beta^{2}(x)}{a(x)} F_{1}^{2} \biggr)
e^{-2s\varphi(x,t)} dx dt\biggr)}, \end{array}
\end{equation}
for every $s\geq s_{0}$.
\end{theorem}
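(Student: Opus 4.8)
The plan is to adapt the Fursikov--Imanuvilov method to the degenerate operator, as in \cite{ACF,CMVI,CMVII}, while treating the extra singular source $(\beta(x)F_1)_x$ by integration by parts. First I would conjugate the equation: setting $z=e^{-s\varphi}v$, so that $v=e^{s\varphi}z$, a direct computation transforms \eqref{e2-i} into $P^+z+P^-z=e^{-s\varphi}\bigl(F_0+(\beta F_1)_x\bigr)=:\tilde F$, where the conjugated operator is split into its self-adjoint and skew-adjoint parts
\[
P^+z=(az_x)_x+s^2a\varphi_x^2\,z+s\varphi_t\,z,\qquad
P^-z=z_t+2sa\varphi_x\,z_x+s(a\varphi_x)_x\,z .
\]
Because $\varphi=\eta\theta$ with $\eta$ equal to the degenerate weight $\phi$ (built from $\int_0^x\tfrac{y}{a(y)}\,dy$ as in \eqref{def-psi}) near $x=0$, one has $\varphi_x=\phi'\theta=-c_1\tfrac{x}{a}\theta$, hence $a\varphi_x^2=c_1^2\tfrac{x^2}{a}\theta^2$ close to the degeneracy. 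This is precisely why the weights $s\theta a$ and $s^3\theta^3\tfrac{x^2}{a}$ appearing on the left of \eqref{e.Carleman} are the natural output of the method.

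The core step is the energy identity obtained by taking the $L^2(Q)$-norm of the conjugated equation,
\[
\|P^+z\|_{L^2(Q)}^2+\|P^-z\|_{L^2(Q)}^2+2\langle P^+z,P^-z\rangle_{L^2(Q)}=\|\tilde F\|_{L^2(Q)}^2 ,
\]
and then expanding the cross term $2\langle P^+z,P^-z\rangle$ by integrating by parts in $x$ and in $t$. The time boundary terms vanish since $\theta(t)\to+\infty$ as $t\to0^+,T^-$. Collecting the nine resulting products, the leading distributed contributions are $\int\!\!\int s\theta a\,z_x^2$ and $\int\!\!\int s^3\theta^3\tfrac{x^2}{a}z^2$; for instance the product of $s^2a\varphi_x^2z$ and $2sa\varphi_x z_x$ yields, after one integration by parts, a term $-2\int\!\!\int s^3(a^2\varphi_x^3)_x z^2=2c_1^3\int\!\!\int s^3\theta^3\tfrac{x^2(3a-xa')}{a^2}z^2$, whose positivity relies exactly on $3a-xa'\ge(3-K)a>0$ from \eqref{H1}, resp. \eqref{H2} (ii), together with $c_1>0$. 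All other interior terms are of strictly lower order in $s$ and are absorbed, using the degenerate Hardy--Poincar\'e inequality (valid under \eqref{H1}, resp. \eqref{H2}, see \cite{ACF}), which controls $\int_0^1\tfrac{x^2}{a}z^2$ by $\int_0^1 a\,z_x^2$, once $s\ge s_0$.

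It remains to handle the right-hand side. The $F_0$ part is immediate by Young's inequality against the retained $\|P^-z\|^2$. The derivative source $(\beta F_1)_x$ is the most delicate point: rather than squaring it, I would pair it with the first-order skew-adjoint multiplier $P^-z$ and integrate by parts in $x$, so that the $x$-derivative is transferred onto the weight and onto $P^-z$; the second-order contributions so produced are kept under control by the nonnegative terms $\|P^+z\|^2+\|P^-z\|^2$ left on the good side, while the first-order remainders of the form $s\varphi_x\beta F_1 e^{-2s\varphi}z$ are bounded, via Young's inequality, by an $\varepsilon$-fraction of the dominant terms plus $C_\varepsilon\int\!\!\int s^2\theta^3\tfrac{\beta^2}{a}F_1^2 e^{-2s\varphi}$, which is exactly the $F_1$-contribution permitted in \eqref{e.Carleman}. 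Here the bound $\tfrac{\beta^2}{a}\le C\tfrac{x^2}{a}$ from \eqref{condicion.beta} is essential, since it makes the $F_1$-weight comparable to, but of lower $s$-power than, the dominant weight.

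Finally, the boundary terms at $x=1$ vanish by the Dirichlet condition, while those at $x=0$ vanish thanks to the boundary conditions of \eqref{e2-i} (Dirichlet in the WDP, $(av_x)(0)=0$ in the SDP) combined with the integrability $\tfrac{1}{\sqrt a}\in L^1(0,1)$, which forces the relevant limits to be zero. The only surviving terms are those supported where $\xi'\ne0$, i.e.\ in the region $(\kappa^-,\kappa^+)\subset\subset\omega$ where $\eta$ interpolates between $\phi$ and the nondegenerate Fursikov--Imanuvilov weight $\psi$; there the equation is uniformly parabolic, so a Caccioppoli-type inequality bounds the local $\int v_x^2$ by the local $\int v^2$, producing the observation integral $\int_0^T\!\!\int_\omega e^{-2s\varphi}v^2$. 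Returning from $z$ to $v$ via $v=e^{s\varphi}z$ (which only generates further lower-order terms, absorbed as above) yields \eqref{e.Carleman}. I expect the main obstacle to be precisely the simultaneous control of the singular source $(\beta F_1)_x$ without degrading the sharp power of $s$, together with the rigorous justification that the boundary contributions at the degeneracy vanish; this is where the general (non-power) weight $a$ and the hypotheses \eqref{H1}--\eqref{H2} must be exploited most carefully.
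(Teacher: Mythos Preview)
Your direct conjugation argument correctly reproduces what the paper does to prove the \emph{auxiliary} Lemma~\ref{aux.Carleman} (the Carleman estimate for \eqref{e3} with a plain $L^2$ source $F$): the splitting $P^+_s,P^-_s$, the positivity coming from $xa'\le Ka$, the treatment of the boundary terms at the degeneracy, and the Caccioppoli step on $\omega$ are exactly as in the paper. The gap is in your handling of the singular source $(\beta F_1)_x$. When you pair $e^{-s\varphi}(\beta F_1)_x$ with $P^-z=z_t+2sa\varphi_x z_x+s(a\varphi_x)_x z$ and integrate by parts in $x$, the derivative lands on $P^-z$; the $z_t$ component then produces a term of the form $\int\!\!\int \beta F_1\,e^{-s\varphi}z_{tx}\,dx\,dt$. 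This mixed derivative is \emph{not} dominated by $\|P^+z\|_{L^2}^2+\|P^-z\|_{L^2}^2$: neither operator contains $z_{tx}$, and a further integration by parts in $t$ would require control of $\partial_t F_1$, which is unavailable since $F_1\in L^2(Q)$ only. Pairing with $P^+z$ instead is worse (one meets $(az_x)_{xx}$). So the direct scheme, as you describe it, cannot close for an $H^{-1}$-type right-hand side.

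The paper avoids this by the Imanuvilov--Yamamoto duality technique \cite{yamamoto}. First one proves \eqref{ineq.aux.Carleman} for a pure $L^2$ source (essentially your argument). That estimate is then used, via Lax--Milgram, to solve two auxiliary null-controllability problems \eqref{auxcon1}--\eqref{auxcon2} for the \emph{forward} operator, with sharp weighted bounds on the state $\hat z,\tilde z$, on the control, and (after a further energy computation) on $\sqrt{a}\,\hat z_x,\sqrt{a}\,\tilde z_x$ (estimates \eqref{auxcon1.7} and \eqref{auxcon2.8}). The full estimate \eqref{e.Carleman} is finally obtained by multiplying these auxiliary equations by the solution $v$ of \eqref{e2-i} and integrating by parts: the divergence source $(\beta F_1)_x$ is paired with $\hat z$ (resp.\ $\tilde z$), and after one integration by parts the derivative falls on $\hat z_x,\tilde z_x$, for which the required weighted $L^2$ bounds are already in hand. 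This duality step is precisely what produces the weight $s^2\theta^3\tfrac{\beta^2}{a}$ on $F_1$ without ever needing to differentiate $F_1$ or to control $z_{tx}$.
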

\medskip

The proof of Theorem~\ref{t.Carleman} is given at the end of this section as a consequence of the following result for the degenerate parabolic system:  \begin{equation}\label{e3}
    \left\{ \begin{array}{ll} v_t +  (a(x)v_{x})_{x}=F & \mbox{in}\;\;Q , \\
    \noalign{\smallskip}
    v(1,t)=0  \quad 
    \mbox{ y } \left\{ \begin{array}{ll} v(0,t)=0 & \mbox{for (WDP)},\\
    (av_{x})(0,t)=0 &\mbox{for (SDP)},
    \end{array}\right. & t \in (0,T),\\
    v(x,T)= v_{T}(x), & \mbox{in} \;\; (0,1),
\end{array} \right.
\end{equation}
where  $a$ satisfies \eqref{H1} for the (WDP) and \eqref{H2} for the (SDP), and $F \in L^{2}(Q)$. 

\begin{lemma}\label{aux.Carleman}
Let us assume that \eqref{H1} (WDP) or \eqref{H2} (SDP) hold true and let $T>0$ be given.Then, there exists two positive constants  $C$ and $s_0$, such that every solution $v$ to~\eqref{e3}, satisfies\begin{equation}\label{ineq.aux.Carleman}
 \begin{array}{ll}
 {\displaystyle \int_{0}^{T}\!\!\! \int_{0}^{1} \biggl( s\theta a(x)
v_{x}^{2} + s^{3}\theta^{3} \frac{x^{2}}{a(x)} v^{2} \biggr)
e^{-2s\varphi(x,t)}dx dt} \\
 \quad \leq \quad   C\; {\displaystyle \biggl( \int_{0}^{T}\!\!\! \int_{0}^{1} e^{-2s\varphi(x,t)} F^{2}dxdt + \int_{0}^T\!\!\! \int_{\omega}e^{-2s\varphi(x,t)}v^{2} dx dt\biggr)}
\end{array}
\end{equation}
for every $s\geq s_{0}$.
\end{lemma}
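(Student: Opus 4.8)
The plan is to follow the by-now classical strategy for Carleman estimates, adapted to the degenerate operator, through conjugation with the weight and a splitting into self-adjoint and skew-adjoint parts. First I would set $w=e^{-s\varphi}v$ and compute $e^{-s\varphi}\bigl(v_t+(av_x)_x\bigr)$, which can be written exactly as $P_1 w+P_2 w$ with
\[ P_1 w = (aw_x)_x + s^2 a\varphi_x^2\, w + s\varphi_t\, w, \qquad P_2 w = w_t + 2s a\varphi_x w_x + s(a\varphi_x)_x\, w, \]
the first operator being formally self-adjoint and the second formally skew-adjoint. Squaring and integrating over $Q$ gives
\[ \|P_1 w\|_{L^2(Q)}^2 + \|P_2 w\|_{L^2(Q)}^2 + 2\langle P_1 w, P_2 w\rangle_{L^2(Q)} = \|e^{-s\varphi}F\|_{L^2(Q)}^2, \]
so everything reduces to extracting the dominant positive terms from the cross product $\langle P_1 w, P_2 w\rangle$.

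Second, I would expand $\langle P_1 w, P_2 w\rangle$ into its nine integrals and integrate by parts in $x$ and $t$. The mechanism that produces the principal term on the left of \eqref{ineq.aux.Carleman} is the interaction between $s^2 a\varphi_x^2 w$ and $2sa\varphi_x w_x$: writing $2s^3\int\!\!\int a^2\varphi_x^3\, w w_x = -s^3\int\!\!\int (a^2\varphi_x^3)_x\, w^2 + (\text{boundary})$ and using $\varphi_x=\eta'\theta$ with, near $x=0$, $\eta'(x)=-c_1 x/a(x)$ from \eqref{def-psi}, one finds $-(a^2\varphi_x^3)_x = c_1^3\theta^3\frac{x^2(3a-xa')}{a^2}$, which by \eqref{H1}(ii) / \eqref{H2}(ii) (that is, $xa'\le Ka$ with $K<2$) is bounded below by $c\,\theta^3\frac{x^2}{a}$. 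This is exactly the weight of the term $s^3\theta^3\frac{x^2}{a}v^2 e^{-2s\varphi}$. The gradient term $s\theta a v_x^2 e^{-2s\varphi}$ is produced by the remaining interactions (chiefly $(aw_x)_x$ with $2sa\varphi_x w_x$, together with the time-derivative contributions giving $\theta'$), again through the sign information on $a-xa'$ and $3a-xa'$ furnished by the degeneracy hypotheses. Thus the hypotheses on $a$ are used precisely to guarantee the correct sign of the dominant terms.

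Third, and this I expect to be the main obstacle, I would control the boundary contributions. The time endpoints $t=0,T$ cause no difficulty since $\theta\to+\infty$ there and $e^{-2s\varphi}\to 0$ faster than any power, annihilating those terms; at $x=1$ the Dirichlet condition $v(1,t)=0$ disposes of the endpoint. The delicate point is $x=0$, where $a(0)=0$: one must show that each boundary term (for instance $s^3 a^2\varphi_x^3 w^2$, $s a\varphi_x w_x^2$ or $a w_x w_t$) vanishes as $x\to 0$, and here the two regimes separate. In the (WDP) one has $w(0,t)=0$, while in the (SDP) one uses $(aw_x)(0,t)=0$ together with $a\varphi_x=-c_1 x\theta\to 0$ and the integrability $1/\sqrt a\in L^1(0,1)$ coming from \eqref{H2}(ii); the monotonicity condition \eqref{H2}(iii) is what forces the strongly degenerate boundary terms to decay. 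At this stage a weighted Hardy–Poincaré inequality is also needed to absorb the lower-order remainders of the cross product by the principal term $s^3\int\!\!\int\theta^3\frac{x^2}{a}w^2$, after which all remaining distributed remainders are absorbed by taking $s\ge s_0$ large.

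Finally, I would localize. The computation produces the good terms over all of $(0,1)$ together with terms supported where $\xi'\ne 0$, that is in $(\kappa^-,\kappa^+)\subset\subset\omega$, arising from the transition of $\eta$ between the degenerate weight of \eqref{def-psi} near $0$ and the Fursikov–Imanuvilov weight of \cite{FI} toward the right endpoint (where $\eta'\ne 0$ makes the operator behave as in the non-degenerate case). Since all of these extra terms are supported inside $\omega$, they are moved to the right-hand side; the only one involving $v_x$ is then estimated by a local integral of $v^2$ through a standard Caccioppoli inequality. Collecting the estimates and returning from $w$ to $v=e^{s\varphi}w$ yields \eqref{ineq.aux.Carleman}.
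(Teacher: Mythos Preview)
Your proposal is correct and follows essentially the same route as the paper: conjugate by $e^{-s\varphi}$, split the operator into $P_s^+$ and $P_s^-$ exactly as you wrote, expand the cross product $\langle P_s^+ w,P_s^- w\rangle$ via integration by parts, check boundary terms at $x=0$ in the two regimes using $a\varphi_x=-c_1x\theta$ near $0$, decompose the spatial integrals according to whether the weight $\eta$ equals the degenerate weight (on $[0,\kappa^-]$, handled as in \cite{ACF}) or the Fursikov--Imanuvilov weight (on $[\kappa^+,1]$, handled as in \cite{FI}, using that $a$ is bounded below there), collect the transition terms over $\widetilde\omega\subset\omega$, and finally kill the local $v_x^2$ contribution by Caccioppoli. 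The paper is terser and leans on citations for the interior computations where you spell out the $-(a^2\varphi_x^3)_x$ mechanism and the role of $xa'\le Ka$, but the architecture is the same.
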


\begin{proof}
 
 We define $w(t,x)=e^{-s\varphi(t,x) } v(t,x)$.
Then, $w$ solves
$$(e^{s\varphi} w)_t+(a(x) (e^{s\varphi}w)_x)_x=F.$$
Clearly we get
$$s\varphi_tw+w_t+s(a(x) \varphi_x)_x w +s^2a(x)\varphi^2_xw+2sa(x) \varphi_xw_x+(a(x) w_x)_x=e^{-2s\varphi} F.$$
Following the ideas in \cite{CMVII} we define
$$P^+_s(w)=s\varphi_tw+s^2a(x)\varphi^2_xw+(a(x) w_x)_x$$
$$P^-_s(w)=w_t+s(a(x) \varphi_x)_x w+2sa(x) \varphi_xw_x.$$
We want to estimate the $L^2$-scalar product $(P^+_s,P^-_s)$.
We define
$$Q_1=\int_0^T\!\!\!\int_0^1 P^+_s(w)w_t ,$$
$$Q_2=2\int_0^T\!\!\!\int_0^1P^+_s(w) sa(x) \varphi_xw_x,$$
and
$$Q_3=  \int_0^T\!\!\!\int_0^1 P^+_s(w)s(a(x) \varphi_x)_x .$$

Integrating by parts, we get
$$Q_1=-\,\frac{1}{2}\,\int_0^T\!\!\!\int_0^1w^2(s\theta_{tt}\eta+2s^2a(x) \theta\theta_{t}\eta_x^2)+\left.\int_0^Taw_xw_t\right|_0^1$$
Observe that $w_t(1)=0$. To compute the boundary condition at $x=0$ we see that in the WDP case $w_t(0)=0$ and in the SDP situation we have $aw_x(0)=-s\theta a\varphi_xw(0)$. By definition of $\varphi$, $\varphi (0) = \phi(0) $. We proceed as in \cite{ACF} p. 184. Observe that $a\phi_x=-c_1 x$, $x\in (0,\tau^-]$. When $x\to 0$,  $a\phi_x\to 0$. So we don't have a contribution of the boundary term.

After integration by parts, we get,
\begin{equation}\begin{array}{l}\Ds Q_2=\left.\int_0^T s^2a\varphi_x\varphi_tw^2+s^3a^2\varphi_x^3w^2+s\varphi_x(aw_x)^2\right|_0^1\\
\noalign{\smallskip}
\Ds \phantom{\Ds Q_2=}-s^2 \int_0^T\!\!\!\int_0^1 a(\varphi_{xx}\varphi_t+\varphi_x\varphi_{xt}w^2 
-s^3\int_0^T\!\!\!\int_0^1[(a\varphi_x)_xa\varphi_x+(a\varphi_x^2)(a\varphi_x)_x]w^2\\
\noalign{\smallskip}
\Ds  \phantom{\Ds Q_2=}-s \int_0^T\!\!\!\int_0^1\varphi_{xx}(aw_x)^2.
\end{array}
\end{equation}
 We proceed again as in \cite{ACF} to estimate the boundary terms at $x=0$, taking in mind the definition of $\varphi$ close to $x=1$. It is clear that at $x=1$, $w(1)=0$. 
 In the (WDP) we get
 \begin{equation}\begin{array}{l}\Ds 
 \text{b.t} =\int_0^Ts\varphi_x(1)(a(1)w_x(1))^2 -\int_0^Ts\varphi_x(0)(a(0)w_x(0))^2\\
 \Ds \phantom{ \text{b.t}}=-s\int_0^T\theta \psi_x(1)(a(1)w_x(1))^2-s\int_0^T\theta \phi_x(0)(a(0)w_x(0))^2\\
  \Ds \phantom{ \text{b.t}}=B_1+B_2
  \end{array}
\end{equation}
By construction, $B_1\geq 0$ and since close to $x=0$, $a(x)\phi_x(x)=-c_1 x$, $B_2=0$.

In the case (SDP) we get
 $aw_x(0)=-sa\phi_x\theta w(0)$ so
 $$\int_0^T\theta \phi_x(0)(a(0)w_x(0))^2=0$$ and the term $B_1\geq 0$.
  
Similarly, 
we get
\begin{equation}
\begin{array}{l}\Ds Q_3=- \int_0^T\!\!\!\int_0^1(\varphi_t(a\varphi_x)_x +s^3 a(x)\varphi_x^2(a\varphi_x)_x]w^2\\
\Ds
-s\int_0^T\!\!\!\int_0^1[(a\varphi_x)_{xx}w+(a\varphi_x)_x w_x]aw_x+\left.\int_0^T(a\varphi_x)_x aw_xw\right|_0^1
\end{array}
\end{equation}
Proceeding as before it is easy to see that the boundary terms are zero. (See also p.184 \cite{ACF})

Altogether we get that
$$
\begin{array}{l}\displaystyle (P^+_s,P^-_s)\ge \displaystyle-\,\frac{s}{2}\,\int_0^T\!\!\!\int_0^1\theta_{tt}\eta w^2+2s^2 \int_0^T\!\!\!\int_0^1 a \theta\theta_{t}\eta_x^2w^2
 -s^2 \int_0^T\!\!\!\int_0^1 a\eta_{xx}\eta \theta \theta_tw^2
\\  \noalign{\smallskip}
\phantom{(P^+_s,P^-_s)\ge}\displaystyle-2s \int_0^T\!\!\!\int_0^1  \theta \eta_{xx}(aw_x)^2 -s^3\int_0^T\!\!\!\int_0^1[(a\varphi_x)_xa\varphi_x+(a\varphi_x^2)(a\varphi_x)_x]w^2
\\  \noalign{\smallskip}
\phantom{(P^+_s,P^-_s)\ge} \displaystyle-s \int_0^T\!\!\!\int_0^1\theta [(a\eta_x)_{xx}w+a_x\eta_xw_x]aw_x-s^3\int_0^T\!\!\!\int_0^1\theta^3a^2\eta_x^2\eta_{xx}w^2\\  \noalign{\smallskip}\Ds \phantom{(P^+_s,P^-_s)\ge}+\int_0^T\!\!\!\int_0^1\theta_t\theta\eta(a\eta_x)_xw^2
\end{array}
$$

We can decompose the right hand side of this inequality as the sum of integral over $[0,\kappa^-]$  , $[\kappa^-,\kappa^+]$ and $[\kappa^+,1]$. Observe that our choice of weight functions allows to reason on $[0,\kappa^-]$ similarly to the derivation of (3.10) in \cite{ACF}. On the other hand, on  $(\kappa^+, 1)$ we obtain classical Carleman estimates with terms on the gradient and the function over the set $\widetilde \omega$. Proceeding as in \cite{ACF} to bound the terms over $(0,\kappa^-)$ and as traditional Carleman estimates (see e.g. \cite{FI}) for the terms over $(\kappa^+, 1)$ and having in mind that $a(x)$ is bounded bellow by a positive constant in $(\kappa^+,1)$ we obtain, with appropriate constants:
\begin{equation*} 
\begin{aligned}
&\iint_Q\biggl( s\theta a(x)
v_{x}^{2} + s^{3}\theta^{3} \frac{x^{2}}{a(x)} v^{2} \biggr)e^{-2s\varphi}\,dx\,dt   \\
&\leq C\Big( \iint_Q
e^{-2s\varphi }F^2\,dx\,dt+\int_0^T\!\!\!\int_{\widetilde\omega} e^{-2s\varphi}v^2+\int_0^T\!\!\!\int_{\widetilde\omega} e^{-2s\varphi}v_x^2\,dx\,dt \Big).
\end{aligned}
\end{equation*}
To get \eqref{ineq.aux.Carleman} we eliminate the term $\int_0^T\!\!\!\int_{\widetilde\omega} e^{-2s\varphi}v_x^2\,dx\,dt$ performing local energy estimates and ``growing'' $\widetilde \omega$ to $\omega$. That is, we use Cacciopoli's inequality, which is:
$$ \int_0^T\!\!\!\int_{\widetilde\omega} e^{-2s\varphi}v_x^2\,dx\,dt\leq C\left(\int_0^T\!\!\!\int_{ \omega} e^{-2s\varphi}v^2dxdt+\int_0^T\!\!\!\int_{ \Omega} e^{-2s\varphi}F^2dxdt\right).$$
This completes the proof.

\end{proof}

\subsection{Proof of Theorem~\ref{t.Carleman}}
 \bigskip
 
Here we use the technique introduced in~\cite{yamamoto}; that is, we use Carleman inequality~\eqref{ineq.aux.Carleman} to obtain two null controlability auxiliary results that are used in the proof of   Carleman inequality~\eqref{e.Carleman}. 
\medskip

We consider the following systems:

\begin{equation}\label{auxcon1}
    \left\{ \begin{array}{ll} z_t -  (a(x)z_{x})_{x} = s^{3} \theta^{3} {\displaystyle \frac{x^2}{a(x)}} e^{-2s\varphi(x,t)}f + u1_{\omega} & \mbox{en}\;\;Q , \\
    \noalign{\smallskip}
    z(1,t)=0 \quad
    \mbox{ and } \left\{ \begin{array}{ll} z(0,t)=0 & \mbox{for the (WDP)},\\
    (az_{x})(0,t)=0 &\mbox{for the (SDP)},
    \end{array}\right. & t \in (0,T),\\
    z(x,0)= 0, & \mbox{in} \;\; (0,1)
\end{array} \right.
\end{equation}

and

\begin{equation}\label{auxcon2}
    \left\{ \begin{array}{ll} z_t -  (a(x)z_{x})_{x} = s \theta (e^{-2s\varphi(x,t)}\sqrt{a}f)_{x} + u1_{\omega} & \mbox{in}\;\;Q , \\
    \noalign{\smallskip}
    z(1,t)=0 \quad
    \mbox{ and } \left\{ \begin{array}{ll} z(0,t)=0 & \mbox{for the (WDP)},\\
    (az_{x})(0,t)=0 &\mbox{for the (SDP)},
    \end{array}\right. & t \in (0,T),\\
    z(x,0)= 0, & \mbox{in} \;\; (0,1),
\end{array} \right.
\end{equation}
with $f \in L^{2}(Q)$.
\medskip

We define $P_{w} = \{ p\in C^2(\bar Q) | \;p(0)=0, p(1)=0 \}$ for the (WDP) case  and $P_{S} = \{ p\in C^2(\bar Q) | \;(ap_{x}
)(0)=0, p(1)=0 \}$ for the (SDP) case .  Let $\calL$ and $\calL^{*}$ be two linear operators defined as:
\[ \calL p=p_t-(a(x)p_x)_x \]
and
\[\calL^*p=p_t+(a(x)p_x)_x,\]
for every  $p$ in $P_{w}$ for the (WDP) case  and $p\in P_{S}$  for the (SDP) case. With this, we define
 \[\lambda (p,p') = \int_{0}^{T}\!\!\! \int_{0}^{1}e^{-2s\varphi(x,t)} \calL^*p\calL^*p'dxdt 
 +\int_{0}^{T}\!\!\!\int_{\omega}e^{-2s\varphi(x,t)} 
 pp'dxdt,\]
for every $p,p'$ in $P_{w}$ for the (SDP) case  and for every  $p,p'$ in $P_{S}$ for the (SDP) case. It is not difficult to check that   $\lambda (\cdot,\cdot)$ is a bilinear, positive and symmetric form. Then, $\lambda(\cdot,\cdot)$ defines an internal product in $P_{w}$ and in  $P_{S}$. We define $\mathcal{P}_{w}$ 
 as the closure of  $P_{w}$ with the norm $\| p\|_{\mathcal{P}_{w}}=(\lambda(p,p))^{1/2}$ in the case (WDP) and $\mathcal{P}_{S}$ 
 as the closure of  $P_{S}$  with the norm $\| p\|_{\mathcal{P}_{w}}=(\lambda(p,p))^{1/2}$ in the case (SDP).
\medskip

As a consequence of  Carleman inequality~\eqref{ineq.aux.Carleman}, we get the following null controllability result.

\begin{theorem}\label{cont.cero.aux}
Let $T>0$ and $f \in L^{2}(Q)$ be given. Then, we have that:
\begin{enumerate}
\item [\em{(1)}]For system~\eqref{auxcon1}, it exists a control $u$ and a state  $z$,  such that $z(x,T) = 0$ in $(0,1)$ and
\begin{equation}\label{auxcon1.0}
\begin{array}{ll}
{\displaystyle \int_{0}^{T}\!\!\! \int_{0}^{1} e^{2s\varphi(x,t)} z^{2} dxdt
+ \int_{0}^{T}\!\!\! \int_{\omega} e^{2s\varphi(x,t)} u^{2} dxdt}\\
{\displaystyle \;\leq \;C\; \int_{0}^{T}\!\!\! \int_{0}^{1}s^{3} \theta^{3}
\frac{x^{2}}{a(x)} e^{-2s\varphi(x,t)}f^{2}dxdt.}
\end{array}
\end{equation}
is satisfied.
\item [\em{(2)}]For system~\eqref{auxcon2}, it exists a   control $u$  and a state $z$, such that $z(x,T) = 0$ in $(0,1)$ and
the following is satisfied
\begin{equation}\label{auxcon2.0}
\begin{array}{ll}
{\displaystyle \int_{0}^{T}\!\!\! \int_{0}^{1} e^{2s\varphi(x,t)} z^{2} dxdt
+ \int_{0}^{T} \!\!\!\int_{\omega} e^{2s\varphi(x,t)} u^{2} dxdt}\\
{\displaystyle \;\leq \;C\;  \int_{0}^{T}\!\!\! \int_{0}^{1} s \theta
 e^{-2s\varphi(x,t)}f^{2}dxdt.}
\end{array}
\end{equation}
\end{enumerate} 
\end{theorem}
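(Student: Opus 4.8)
The plan is to prove both null controllability statements by the classical penalized/dual variational method (the Fursikov--Imanuvilov duality), using the Carleman inequality \eqref{ineq.aux.Carleman} as the key coercivity estimate. I will treat case (1) in detail; case (2) is entirely analogous with the obvious replacement of the right-hand side source and of the Carleman weight factors. First I would observe that, by duality, the existence of a control $u$ driving $z$ to rest at time $T$ together with the weighted bound \eqref{auxcon1.0} is equivalent to a suitable observability-type estimate for the adjoint operator $\calL^*$. Concretely, I would introduce on $P_w$ (resp. $P_S$) the bilinear form $\lambda(\cdot,\cdot)$ already defined in the excerpt, which is an inner product there, and complete to obtain the Hilbert space $\mathcal{P}_w$ (resp. $\mathcal{P}_S$).

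The core of the argument is to define the continuous linear functional
\[
\ell(p) = \int_0^T\!\!\!\int_0^1 s^3\theta^3\frac{x^2}{a(x)}\,e^{-2s\varphi(x,t)} f\, p \, dx\,dt
\]
on $\mathcal{P}_w$ and to verify its continuity. This is exactly where Carleman inequality \eqref{ineq.aux.Carleman} enters: applying Cauchy--Schwarz and then \eqref{ineq.aux.Carleman} to $v=p$ (note $p$ solves a system of the form \eqref{e3} with $F=\calL^*p$), I get
\[
|\ell(p)| \leq \Bigl(\int_0^T\!\!\!\int_0^1 s^3\theta^3\tfrac{x^2}{a(x)} e^{-2s\varphi}f^2\Bigr)^{1/2}
\Bigl(\int_0^T\!\!\!\int_0^1 s^3\theta^3\tfrac{x^2}{a(x)} e^{-2s\varphi}p^2\Bigr)^{1/2}
\leq C\,\Bigl(\int_0^T\!\!\!\int_0^1 s^3\theta^3\tfrac{x^2}{a(x)} e^{-2s\varphi}f^2\Bigr)^{1/2}\|p\|_{\mathcal{P}_w},
\]
so $\ell$ is bounded on $\mathcal{P}_w$ with norm controlled by the square root of the right-hand side of \eqref{auxcon1.0}. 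By the Riesz representation theorem there exists a unique $\hat p\in\mathcal{P}_w$ with $\lambda(\hat p,p)=\ell(p)$ for all $p$. I would then \emph{define} the candidate state and control by
\[
z = e^{-2s\varphi}\calL^*\hat p \quad\text{in } Q,\qquad u = -\,e^{-2s\varphi}\hat p\big|_{\omega},
\]
and check, by unraveling the Euler--Lagrange identity $\lambda(\hat p,p)=\ell(p)$ against test functions $p\in P_w$ and integrating by parts, that $(z,u)$ is a weak solution of \eqref{auxcon1} with $z(x,0)=0$ and $z(x,T)=0$. Finally, taking $p=\hat p$ in the variational identity gives
\[
\int_0^T\!\!\!\int_0^1 e^{2s\varphi} z^2\,dx\,dt + \int_0^T\!\!\!\int_\omega e^{2s\varphi} u^2\,dx\,dt
= \lambda(\hat p,\hat p) = \ell(\hat p) \leq C\int_0^T\!\!\!\int_0^1 s^3\theta^3\tfrac{x^2}{a(x)} e^{-2s\varphi}f^2\,dx\,dt,
\]
which is precisely \eqref{auxcon1.0}, the last inequality following from the continuity bound on $\ell$ and the arithmetic--geometric inequality.

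The main obstacle I anticipate is not the abstract Riesz step but the justification that the weak solution obtained by density actually has the claimed regularity and satisfies the boundary conditions and the terminal condition $z(T)=0$ in the appropriate trace sense. Because the equation is degenerate at $x=0$, one must be careful that the boundary terms arising in the integration by parts vanish --- these are exactly the degenerate boundary contributions (Dirichlet in (WDP), the flux condition $(az_x)(0)=0$ in (SDP)) handled in the proof of Lemma \ref{aux.Carleman}, and the same reasoning, together with the fact that $e^{-2s\varphi}$ and its weights vanish to all orders as $t\to 0^+, T^-$, guarantees both the attainment of the initial/terminal data and the admissibility of $z,u$ in the weighted $L^2$ spaces dictated by \eqref{auxcon1.0}. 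For case (2) the functional is $\ell(p)=\int_0^T\!\!\int_0^1 s\theta\,e^{-2s\varphi}\sqrt{a}\,f\,(-p_x)\,dx\,dt$ after moving the $x$-derivative off the source by parts, and continuity follows from the $s\theta a\,v_x^2$ term on the left of \eqref{ineq.aux.Carleman}; the rest of the argument is identical.
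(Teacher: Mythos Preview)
Your proposal is correct and follows essentially the same variational approach as the paper: the paper introduces the same bilinear form $\lambda$, the same linear functional $\ell$, obtains $\bar p$ via Lax--Milgram (equivalently Riesz, since $\lambda$ is symmetric), defines $\bar z=e^{-2s\varphi}\calL^*\bar p$ and $\bar u=e^{-2s\varphi}\bar p\,\chi_\omega$, and derives the weighted bound. Your concluding step---reading off $\int e^{2s\varphi}z^2+\int_\omega e^{2s\varphi}u^2=\lambda(\hat p,\hat p)=\ell(\hat p)$ directly from the variational identity and then self-bounding---is in fact a bit more streamlined than the paper's route, which multiplies $\calL^*\bar p=e^{2s\varphi}\bar z$ by $\bar z$, integrates by parts against the equation for $\bar z$, and re-applies the Carleman inequality \eqref{ineq.aux.Carleman} to $\bar p$ to close the estimate.
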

\begin{proof}

Given $f\in L^2(Q)$ y $T>0$
we consider the following problem:

\begin{equation}\label{prob}
\left\{ 
\begin{array}{l}\Ds
\calL( e^{-2s\varphi(x,t)}\calL^* p)-\terf f =  e^{-2s\varphi(x,t)} p\chi_\omega \;\; \text{ en }Q \\  \Ds
p(1)=e^{-2s\varphi(x,t)}\calL^*p(1)=0 \text{ and } \left\{\begin{array}{l} 
\Ds
p(0)=e^{-2s\varphi(x,t)}\calL^*p(0)=0 \\ \mbox{ for the WDP, }\\ 
\Ds
ap_x(0)=e^{-2s\varphi(x,t)}\calL^*ap_x(0)=0 \\ \mbox{ for the SDP, }
\end{array}\right.\text{ in } (0,T)\\ 
\Ds
e^{-2s\varphi(x,t)} \calL^* p(x,0)=e^{-2s\varphi(x,t)} \calL^* p(x,T)=0 \;\;\mbox{in} \;\; (0,1).
\end{array}
\right.  
\end{equation}
\medskip

We show that has a solution in the case WDP  (the argument in the SDP is analougous). 

Carleman inequality~\eqref{ineq.aux.Carleman} implies
\[
\begin{array}{ll}
{\displaystyle \int_{0}^{T}\!\!\! \int_{0}^{1}\biggl(s\theta a(x)p_x^2 + s^3\theta^3\frac{x^2}{a(x)} p^2\biggr)e^{-2s\ph(x,t)}
dxdt 
}\\
{\displaystyle \le\;\; C\biggl( \int_{0}^{T}\!\!\! \int_{0}^{1}e^{-2s\varphi(x,t)} |\calL^*p|^2\;dxdt +\int_{0}^{T}\!\!\!\int_{\omega}e^{-2s\varphi(x,t)} p^2dxdt\biggr),}
\end{array}
\]
for every $p\in \mathcal P_{w}$.
In consequence, for every  $p \in \mathcal P_{w}$

$$ \int_{0}^{T}\!\!\! \int_{0}^{1}\biggl(s\theta a(x)p_x^2 + s^3\theta^3\frac{x^2}{a(x)} p^2\biggr)e^{-2s\ph(x,t)}dxdt<\infty.$$
On the other hand,
\begin{eqnarray*}\ell (p)&=&  - \int_{0}^{T}\!\!\! \int_{0}^{1}\terf fpdxdt   \\
&\le & C\biggl( \int_{0}^{T}\!\!\! \int_{0}^{1} \terf f^2\biggr)^{1/2}\|p\|_{	\mathcal P_{w}}
\end{eqnarray*}
and then $\ell$ es linear and continuous on $\mathcal P_{w}$.
From  Lax-Milgram Theorem it exists a unique   $\bar p\in \mathcal P$ solution to the problem
$$\lambda (\bar p, p')=\ell (p') \quad \forall p'\in \ell (p').$$ 
In consequence, $\bar p$ is a  weak solution of    \eqref{prob}. 
It can be shown that if $p$ is a classical solution, then it satisfies $e^{-2s\varphi(x,t)}\calL^*p(0)=0 $, $e^{-2s\varphi(x,t)}\calL^*p(1)=0 $. Observe that $p(0) =0=p(1)$  in  $\mathcal P_w$ and  $e^{-2s\varphi(x,t)}\calL^*p(x,0)=0 $, $e^{-2s\varphi(x,t)}\calL^*p(x,T)=0$. In fact, multiplying~\eqref{prob} by $p'\in \mathcal P_{w}$ and integrating by parts we get
\[ \int_{0}^ {1}e^{-2s\varphi(x,t)} \calL^*p p'\biggr|_{0}^{T}\;dx + \int_{0}^{T}a(x) e^{-2s\varphi(x,t)} \calL^*p(x,t) p'_{x}\biggr|_{0}^{1}\; dt +\ell (p') = \lambda(p,p'), \;\;\; \forall p'\in \mathcal P_{d}.\]
Taking an appropriate $p'\in \mathcal P_{d}$ we conclude that  $e^{-2s\varphi(x,t)} \calL^*p(x,t)= 0 \;\;\mbox{in} \;\partial Q$. 

We define now

\begin{equation}\label{2}
\bar z= e^{-2s\varphi(x,t)} \calL^*\bar p; \ \bar u = e^{-2s\varphi(x,t)} \bar p\chi_\omega
\end{equation}
Then, $\bar z$ solves
\begin{equation}\label{prob2}
\left\{ 
\begin{array}{l}\Ds
\bar z_t -(a(x)\bar z_x)_x=\terf f+  \bar u\chi_\omega \text{ in }Q \\  \Ds
\bar z(1) =0 \text{ y }  \bar z(0)=0 \text{ WDP } \\ \Ds
\bar z(x,0)=\bar z(x,T)=0
\end{array}
\right.  
\end{equation}

From \eqref{2}, 
\begin{equation}\label{4}
\bar p_{t}+(a(x)\bar p_x)_x= e^{2s\varphi(x,t)}\bar z.
\end{equation}
In order to estimate the norms of   $\bar z$ and of the control $\bar u$ we multiply  \eqref{4} by $\bar z$.
Then,
 $$ \int_{0}^{T}\!\!\! \int_{0}^{1} (\bar p_t+(a(x)\bar p_x)_x)\bar zdxdt=  \int_{0}^{T}\!\!\! \int_{0}^{1}e^{2s\varphi(x,t)}\bar z
 ^2\;dxdt$$
 Integrating by parts in space and time and using  \eqref{prob2} we see that 
$$- \int_{0}^{T}\!\!\! \int_{0}^{1} \terf f \bar p\;dxdt -\int_{0}^{T}\!\!\! \int_{\omega} \bar u\bar p\;dxdt =  \int_{0}^{T}\!\!\! \int_{0}^{1} e^{2s\varphi(x,t)}\bar z^2\;dxdt$$
$$I_1+I_2=  \int_{0}^{T}\!\!\! \int_{0}^{1} e^{2s\varphi(x,t)}\bar z^2\;dxdt$$
We know that  $\bar u=e^{-2s\varphi(x,t)} \bar p\chi_\omega$,
then
$$I_2= -\int_{0}^{T} \int_{\omega} e^{-2s\varphi(x,t)}\bar p^2\;dxdt.$$
On the other hand,
$$I_1\leq \frac{1}{2\delta}  \int_{0}^{T}\!\!\! \int_{0}^{1} \terf f ^2\;dxdt+ \frac{\delta}{2}  \int_{0}^{T}\!\!\! \int_{0}^{1} 
\terf \bar p^2\;dxdt.$$ 
Since  $\bar p$ is solution to  \eqref{4}  we can apply Carleman inequality ~\eqref{ineq.aux.Carleman} with right hand side  $e^{2s\varphi(x,t)}\bar z$. 
We have that
\begin{eqnarray*}
\frac{\delta}{2}  \int_{0}^{T}\!\!\! \int_{0}^{1}\terf \bar p^2dxdt &\leq & \frac{C\delta}{2}
\biggl( \int_{0}^{T}\!\!\! \int_{0}^{1} e^{-2s\varphi(x,t)}e^{4s\varphi(x,t)} \bar z^2dxdt\\
 & + & \int_{0}^{T}\!\!\!  \int_{\omega}e^{-2s\varphi(x,t)}   \bar p^2\;dxdt\biggr)
\end{eqnarray*}
From \eqref{2} we get
$$\int_{0}^{T} \!\!\! \int_{\omega}e^{-2s\varphi(x,t)}\bar p^2\;dxdt= \int_{0}^{T}\!\!\! \int_\omega e^{2s\varphi(x,t)}\bar u^2\;dxdt$$
In conclusion, we got 
$$ \int_{0}^{T} \!\!\! \int_{\omega} e^{2s\varphi(x,t)}\bar u^2\;dxdt+  \int_{0}^{T}\!\!\! \int_{0}^{1} e^{2s\varphi(x,t)}\bar z^2\;dxdt\leq C   \int_{0}^{T}\!\!\! \int_{0}^{1}\terf f ^2\;dxdt$$
With this we conclude the proof of \eqref{auxcon1.0}. The proof of \eqref{auxcon2.0}  is similar, we only need to consider functional 
\begin{eqnarray*}\ell (p)&=&  \int_{0}^{T}\!\!\! \int_{0}^{1}s\theta e^{-2s\varphi(x,t)}\sqrt{a(x)}fp_{x}dxdt.  
\end{eqnarray*}

\end{proof}

\begin{proof}[Proof of Theorem~\ref{t.Carleman}]
The proof is given in two steps:
\medskip

\noindent \emph{Step 1. Two auxiliary null controllability problems}

\noindent  We apply the previous result to $v \in L^2(Q)$ solution to~\eqref{adjunto1}. We deduce the existence of a   control $\hat{v}$ and a state $\hat{z}$
such that
\begin{equation}\label{auxcon1.1}
    \left\{ \begin{array}{ll} \hat{z}_t -  (a(x)\hat{z}_{x})_{x} = s^{3} \theta^{3} \frac{x^2}{a(x)} e^{-2s\varphi(x,t)}v + \hat{v}1_{\omega} & \mbox{in}\;\;Q , \\
    \noalign{\smallskip}
    \hat{z}(1,t)=0  \quad
    \mbox{ and } \left\{ \begin{array}{ll} \hat{z}(0,t)=0 & \mbox{WDP} \\
    (a\hat{z}_{x})(0,t)=0 &\mbox{SDP},
    \end{array}\right. & t \in (0,T),\\
    \hat{z}(x,0)=\hat{z}(x,T)=0, & \mbox{in} \;\; (0,1),
\end{array} \right.
\end{equation}
and
\begin{equation}\label{auxcon1.2}
\begin{array}{ll}
{\displaystyle  \int_{0}^{T}\!\!\! \int_{0}^{1} e^{2s\varphi(x,t)}
{\hat{z}}^{2} dxdt
+ \int_{0}^{T} \!\!\! \int_{\omega} e^{2s\varphi(x,t)} {\hat{v}}^{2} dxdt}
\\
{\displaystyle \;\leq \;C\; \int_{0}^{T}\!\!\!  \int_{0}^1 s^{3} \theta^{3}
\frac{x^2}{a(x)} e^{-2s\varphi(x,t)}v^{2}dxdt.}
\end{array}
\end{equation}

If we multiply by $s^{-2} \theta^{-3} e^{2s\varphi(x,t)} \hat{z}$
  equation  \eqref{auxcon1.2} and we integrate by parts, we conclude that
\[
\begin{array}{ll}
{\displaystyle  \int_{0}^{T}\!\!\! \int_{0}^{1} s^{-2}\theta^{-3} a(x)
e^{2s\varphi(x,t)} {\hat{z}_{x}}^{2} dxdt \;=\; -  \int_{0}^{T}\!\!\! \int_{0}^{1} s^{-2}\theta^{-3} 
e^{2s\varphi(x,t)} \hat{z} \hat{z}_{t}dxdt} 
\end{array}
\]
\[\begin{array}{ll}
{\displaystyle -\;\;  \int_{0}^{T}\!\!\! \int_{0}^{1} s^{-2}\theta^{-3} a(x)
(e^{2s\varphi(x,t)})_{x} \hat{z}_{x}\hat{z} dxdt +  \int_{0}^{T}\!\!\! \int_{0}^{1} s \frac{x^2}{a(x)} v\hat{z}_{x} dxdt} \\
{\displaystyle +\;\; \int_{0}^{T}\!\!\!  \int_{\omega} s^{-2}\theta^{-3} 
e^{2s\varphi(x,t)} \hat{v}\hat{z} dxdt}=H_{1} +H_{2}+H_{3}+H_{4}.
\end{array}
\]
We observe that for every $(x,t) \in Q_1$, $|(\theta^{-3}e^{2s\varphi(x,t)})_{t}| \leq C s e^{2s\varphi(x,t)}$ and $|(e^{-2s\varphi(x,t)})_{x}| \leq C s \theta e^{2s\varphi(x,t)}$, and for every  $x \in (0,1)$, $x^{2}/a(x) \leq C$. Then, applying  \eqref{auxcon1.2}, we got
\begin{eqnarray*}
 H_{1} &=&  -\frac{1}{2}  \int_{0}^{T}\!\!\! \int_{0}^{1} s^{-2}\theta^{-3} 
e^{2s\varphi(x,t)}\frac{d}{dt}[{\hat{z}}^{2}] dxdt 
 \; \leq \;  \frac{1}{2} \int_{0}^{T}\!\!\! \int_{0}^{1} s^{-2}|\theta^{-3} 
e^{2s\varphi(x,t)})_{t}| {\hat{z}}^{2}dxdt 
\end{eqnarray*}
\begin{eqnarray}\label{auxcon1.3}
 & \leq & C \;  \int_{0}^{T}\!\!\! \int_{0}^{1} s^{-1} 
e^{2s\varphi(x,t)} {\hat{z}}^{2}dxdt
 \; \leq \; C\;   \int_{0}^{T}\!\!\! \int_{0}^{1} s^{3}\theta^{3} \frac{x^2}{a(x)} e^{-2s\varphi(x,t)} v^{2} dxdt
\end{eqnarray}           

and

\begin{eqnarray}\label{auxcon1.4}
 H_{2}  &\leq &    C\;   \int_{0}^{T}\!\!\! \int_{0}^{1} s^{-1}\theta^{-2} a(x) e^{2s\varphi(x,t)} |\hat{z_{x}}||\hat{z}| dxdt \nonumber \\
 &=& C \;  \int_{0}^{T}\!\!\! \int_{0}^{1} s^{-1}\theta^{-3/2} a(x) e^{2s\varphi(x,t)}\theta^{-1/2}|\hat{z_{x}}||\hat{z}| dxdt  \\
  &\leq &   \frac{1}{2}\;  \int_{0}^{T}\!\!\! \int_{0}^{1} s^{-2}\theta^{-3} a(x) e^{2s\varphi(x,t)}{\hat{z_{x}}}^{2} dxdt  + C  \int_{0}^{T}\!\!\! \int_{0}^{1} e^{2s\varphi(x,t)}{\hat{z}}^{2} dxdt \nonumber\\
\end{eqnarray}

\begin{eqnarray*}  
  &\leq &  \frac{1}{2}\;  \int_{0}^{T}\!\!\! \int_{0}^{1} s^{-2}\theta^{-3} a(x) e^{2s\varphi(x,t)}{\hat{z_{x}}}^{2} dxdt  
  +  C  \int_{0}^{T}\!\!\! \int_{0}^{1} s^{3}\theta^{3} \frac{x^2}{a(x)} e^{-2s\varphi(x,t)} v^{2} dxdt \nonumber .
\end{eqnarray*}

Assuming $s_{0} \geq 1$, we have that
\begin{equation}\label{auxcon1.5}
\begin{array}{lll}
 H_{3} &\leq &{\displaystyle    \int_{0}^{T}\!\!\! \int_{0}^{1} s^{3/2} \frac{x^2}{a(x)}
v\hat{z} dxdt}\\ 
 &\leq & {\displaystyle C\;\biggl( \int_{0}^{T}\!\!\! \int_{0}^{1} s^{3}\theta^{3}  \frac{x^2}{a(x)}
e^{-2s\varphi(x,t)} v^{2}dxdt\biggr)^{1/2}  \biggl(  \int_{0}^{T}\!\!\! \int_{0}^{1}  
e^{2s\varphi(x,t)} {\hat{z}}^{2}dxdt\biggr)^{1/2}  }\\
&\leq & C\; {\displaystyle  \int_{0}^{T}\!\!\! \int_{0}^{1} s^{3}\theta^{3} \frac{x^2}{a(x)} e^{-2s\varphi(x,t)} v^{2} dxdt}
\end{array}
\end{equation}           
and
\begin{equation}\label{auxcon1.6}
\begin{array}{lll}
 H_{4} &\leq &{\displaystyle   \int_{0}^{T}\!\!\! \int_\omega e^{2s\varphi(x,t)}
|\hat{v}\hat{z}| dxdt} \\ 
 &\leq & {\displaystyle C\;\biggl( \int_{0}^{T}\!\!\! \int_{0}^{1} e^{2s\varphi(x,t)} {\hat{v}}^{2}dxdt\biggr)^{1/2}  \biggl(  \int_{0}^{T}\!\!\! \int_{0}^{1}  
e^{2s\varphi(x,t)} {\hat{z}}^{2}dxdt\biggr)^{1/2} }\\
&\leq & C\; {\displaystyle  \int_{0}^{T}\!\!\! \int_{0}^{1} s^{3}\theta^{3} \frac{x^2}{a(x)} e^{-2s\varphi(x,t)} v^{2} dxdt}
\end{array}
\end{equation}  
Applying \eqref{auxcon1.3}, \eqref{auxcon1.4}, \eqref{auxcon1.5} and \eqref{auxcon1.6}, we conclude
\[
\begin{array}{ll}
{\displaystyle  \int_{0}^{T}\!\!\! \int_{0}^{1} s^{-2}\theta^{-3} a(x)
e^{2s\varphi(x,t)} {\hat{z}_{x}}^{2} dxdt \leq C  \int_{0}^{T}\!\!\! \int_{0}^{1} s^{3}\theta^{3} \frac{x^2}{a(x)} 
e^{-2s\varphi(x,t)} v^{2} dxdt,}
\end{array}
\]
which combined with \eqref{auxcon1.2} gives
\begin{equation}\label{auxcon1.7}
\begin{array}{ll}
{\displaystyle  \int_{0}^{T}\!\!\! \int_{0}^{1} e^{2s\varphi(x,t)}
{\hat{z}}^{2} dxdt + \int_{0}^{T}\!\!\! \int_\omega e^{2s\varphi(x,t)}
{\hat{v}}^{2} dxdt} \\ {\displaystyle + \int_{0}^{T}\!\!\! \int_{0}^{1}
s^{-2}\theta^{-3} a(x) e^{2s\varphi(x,t)} {\hat{z}_{x}}^{2} dxdt
 \;\leq \;C \int_{0}^{T}\!\!\! \int_0^1 s^{3} \theta^{3} \frac{x^{2}}{a(x)}
e^{-2s\varphi(x,t)}v^{2}dxdt}
\end{array}
\end{equation}
for every $s \geq s_{0}$.
\bigskip

On the other hand, applying part {\em 2} of  Theorem~\ref{cont.cero.aux} for $f=\sqrt{a}v_{x} \in L^2(Q)$, where $v$ is the solution to~\eqref{adjunto1}, we can deduce the existence of a control  $\tilde{v}$ and a state   $\tilde{z}$
such that
 \begin{equation}\label{auxcon2.1}
    \left\{ \begin{array}{ll} \tilde{z}_t -  (a(x)\tilde{z}_{x})_{x} = s \theta (e^{-2s\varphi(x,t)}a(x)v_{x})_{x} + \tilde{v}1_{\omega} & \mbox{in}\;\;Q , \\
    \noalign{\smallskip}
    \tilde{z}(1,t)=0  \quad
    \mbox{ and } \left\{ \begin{array}{ll} \tilde{z}(0,t)=0 & \mbox{WDP} ,\\
    (a\tilde{z}_{x})(0,t)=0 &\mbox{SDP},
    \end{array}\right. & t \in (0,T),\\
    \tilde{z}(x,0)=\tilde{z}(x,T)=0, & \mbox{in} \;\; (0,1),
\end{array} \right.
\end{equation}
and
\begin{equation}\label{auxcon2.2}
\begin{array}{ll}
{\displaystyle  \int_{0}^{T}\!\!\! \int_{0}^{1} e^{2s\varphi(x,t)}
{\tilde{z}}^{2} dxdt
+ \int_{0}^{T}\!\!\! \int_{\omega} e^{2s\varphi(x,t)} {\tilde{v}}^{2} dxdt}\\
 \leq C{\displaystyle \; \int_{0}^{T} \!\!\!\int_{0}^1 s \theta a(x)
e^{-2s\varphi(x,t)}v_{x}^{2}dxdt}
\end{array}
\end{equation}
for every  $s \geq s_{0}$.
\medskip

It is not difficult to see that 
\[
\begin{array}{ll}
{\displaystyle  \int_{0}^{T}\!\!\! \int_{0}^{1} s^{-2}\theta^{-2} a(x)
e^{2s\varphi(x,t)} {\tilde{z}_{x}}^{2} dxdt \;=\; -  \int_{0}^{T}\!\!\! \int_{0}^{1} s^{-2}\theta^{-2} 
e^{2s\varphi(x,t)} \tilde{z} \tilde{z}_{t}dxdt} \\
\noalign{\smallskip}
{\displaystyle -\;\;  \int_{0}^{T}\!\!\! \int_{0}^{1} s^{-2}\theta^{-2} a(x)
(e^{2s\varphi(x,t)})_{x} \tilde{z}_{x}\tilde{z} dxdt -  \int_{0}^{T}\!\!\! \int_{0}^{1} s^{2}\theta^{-1} a(x)
v_{x} \tilde{z}_{x} dxdt} \\
\noalign{\smallskip}
{\displaystyle -\;\; \int_{0}^{T}\!\!\! \int_{0}^{1} s^{-1}\theta^{-1} a(x)
(e^{2s\varphi(x,t)})_{x} e^{-2s\varphi(x,t)}v_{x}\tilde{z} dxdt + \int_{0}^{T}\!\!\! \int_{\omega} s^{-2}\theta^{-2} 
e^{2s\varphi(x,t)} \tilde{v}\tilde{z} dxdt}\\
\noalign{\smallskip}
=L_{1} +L_{2}+L_{3}+L_{4}+L_{5}.
\end{array}
\]
Proceeding as before, we conclude that 
\[
\begin{array}{ll}
{\displaystyle  \int_{0}^{T}\!\!\! \int_{0}^{1} s^{-2}\theta^{-2} a(x)
e^{2s\varphi(x,t)} {\tilde{z}_{x}}^{2} dxdt \leq C  \int_{0}^{T}\!\!\! \int_{0}^{1} s \theta a(x) 
e^{-2s\varphi(x,t)} v_{x}^{2} dxdt,}
\end{array}
\]
combined with inequality  \eqref{auxcon2.2} gives

\begin{equation}\label{auxcon2.8}
\begin{array}{ll}
{\displaystyle  \int_{0}^{T}\!\!\! \int_{0}^{1} e^{2s\varphi(x,t)}
{\tilde{z}}^{2} dxdt
+ \int_{0}^{T}\!\!\! \int_{\omega} e^{2s\varphi(x,t)} {\tilde{v}}^{2} dxdt}\\
\noalign{\smallskip}
{\displaystyle \qquad+ \int_{0}^{T}\!\!\!\int_{0}^{1} s^{-2}
\theta^{-2}a(x)e^{2s\varphi(x,t)}{\tilde{z}_{x}}^{2}dxdt \;\leq
\;C\; \int_{0}^{T} \!\!\!\int_{0}^1 s \theta a(x)
e^{-2s\varphi(x,t)}v_{x}^{2}dxdt}
\end{array}
\end{equation}
for every$s \geq s_{0}$.
\bigskip

\noindent \emph{Step 2. Proof of Carleman inequality \eqref{e.Carleman}}
\newline We multiply \eqref{auxcon1.1} by $v$ solution to~\eqref{adjunto1}. Then, integrating by parts and using H\"older's inequality, we obtain
\[
\begin{array}{l}
 {\displaystyle  \int_{0}^{T}\!\!\! \int_0^1 s^{3} \theta^{3} \frac{x^{2}}{a(x)}
e^{-2s\varphi(x,t)} v^{2} dx dt = - \int_{0}^{T}\!\!\! \int_0^1\hat{z}v_{t}dxdt - \int_{0}^{T}\!\!\! \int_0^1\hat{z}(a(x)v_{x})_{x}dxdt}\\
{\displaystyle - \int_{0}^{T}\!\!\! \int_\omega\hat{v}vdxdt =- \int_{0}^{T}\!\!\! \int_0^1F_{0}\hat{z}dxdt + \int_{0}^{T}\!\!\! \int_0^1\beta(x)F_{1}\hat{z}_{x}dxdt- \int_{0}^{T}\!\!\! \int_\omega\hat{v}vdxdt \leq}
\end{array}
\]
\[\begin{array}{l}
{\displaystyle \biggl(  \int_{0}^{T}\!\!\! \int_{0}^{1} e^{-2s\varphi(x,t)} F_{0}^{2} dxdt  
  + \int_{0}^{T}\!\!\! \int_0^1 s^{2} \theta^{3} \frac{\beta^{2}(x)}{a(x)}
e^{-2s\varphi(x,t)}F_{1}^{2}dxdt  + \int_{0}^{T}\!\!\! \int_\omega
e^{-2s\varphi(x,t)} v^{2}dxdt\biggr)^{1/2} \times}\\
  {\displaystyle \biggl(  \int_{0}^{T}\!\!\! \int_{0}^{1} e^{2s\varphi(x,t)}
{\hat{z}}^{2} dxdt + \int_{0}^{T}\!\!\! \int_0^1 s^{-2} \theta^{-3}
a(x) e^{2s\varphi(x,t)}{\hat{z}_{x}}^{2}dxdt
 + \int_{0}^{T}\!\!\!
\int_{\omega} e^{2s\varphi(x,t)}\hat{v}^{2}dxdt\biggr)^{1/2}.}
\end{array}
\]
From \eqref{auxcon1.7}, we deduce
\begin{equation}\label{paso2.1}
\begin{array}{ll}
{\displaystyle  \int_{0}^{T}\!\!\! \int_0^1 s^{3} \theta^{3}
\frac{x^{2}}{a(x)} e^{-2s\varphi(x,t)} v^{2} dx dt \leq C
\biggl(  \int_{0}^{T}\!\!\! \int_{0}^{1} e^{-2s\varphi(x,t)} F_{0}^{2} dxdt} \\
\noalign{\smallskip}
 {\displaystyle + \int_{0}^{T}\!\!\! \int_0^1 s^{2} \theta^{3} \frac{\beta^{2}(x)}{a(x)}
e^{-2s\varphi(x,t)}F_{1}^{2}dxdt  + \int_{0}^{T}\!\!\! \int_\omega
e^{-2s\varphi(x,t)} v^{2}dxdt\biggr).}
\end{array}
\end{equation}
\medskip

Analogously, if we multiply by  $v$ system \eqref{auxcon2.1} and integrating by parts, we conclude
\[
\begin{array}{l}
 {\displaystyle  \int_{0}^{T}\!\!\! \int_0^1 s \theta a(x)
e^{-2s\varphi(x,t)} v_{x}^{2} dx dt = - \int_{0}^{T}\!\!\! \int_0^1\tilde{z}v_{t}dxdt - \int_{0}^{T}\!\!\! \int_0^1\tilde{z}(a(x)v_{x})_{x}dxdt}\\
\noalign{\smallskip}
{\displaystyle - \int_{0}^{T}\!\!\! \int_\omega\tilde{v}vdxdt =- \int_{0}^{T}\!\!\! \int_0^1F_{0}\tilde{z}dxdt + \int_{0}^{T}\!\!\! \int_0^1\beta(x)F_{1}\tilde{z}_{x}dxdt- \int_{0}^{T}\!\!\! \int_\omega\tilde{v}vdxdt \leq}
\\ 
\end{array}
\]
\[
\begin{array}{l}
{\displaystyle \biggl(  \int_{0}^{T}\!\!\! \int_{0}^{1} e^{-2s\varphi(x,t)} F_{0}^{2} dxdt  
  + \int_{0}^{T}\!\!\! \int_0^1 s^{2} \theta^{2} \frac{\beta^{2}(x)}{a(x)}
e^{-2s\varphi(x,t)}F_{1}^{2}dxdt  + \int_{0}^{T}\!\!\! \int_\omega
e^{-2s\varphi(x,t)} v^{2}dxdt\biggr)^{1/2} \times}\\
\noalign{\smallskip}
  {\displaystyle \biggl(  \int_{0}^{T}\!\!\! \int_{0}^{1} e^{2s\varphi(x,t)}
{\tilde{z}}^{2} dxdt + \int_{0}^{T}\!\!\! \int_0^1 s^{-2} \theta^{-2}
a(x) e^{2s\varphi(x,t)}{\tilde{z}_{x}}^{2}dxdt
 + \int_{0}^{T}
\int_{\omega} e^{2s\varphi(x,t)}\tilde{v}^{2}dxdt\biggr)^{1/2}.}
\end{array}
\]
Considering \eqref{auxcon2.8}, we obtain
\begin{equation}\label{paso2.2}
\begin{array}{ll}
{\displaystyle  \int_{0}^{T}\!\!\! \int_0^1 s \theta a(x)
e^{-2s\varphi(x,t)} v_{x}^{2} dx dt \leq C
\biggl(  \int_{0}^{T}\!\!\! \int_{0}^{1} e^{-2s\varphi(x,t)} F_{0}^{2} dxdt} \\
\noalign{\smallskip}
 {\displaystyle + \int_{0}^{T}\!\!\! \int_0^1 s^{2} \theta^{3} \frac{\beta^{2}(x)}{a(x)}
e^{-2s\varphi(x,t)}F_{1}^{2}dxdt  + \int_{0}^{T}\!\!\! \int_\omega
e^{-2s\varphi(x,t)} v^{2}dxdt\biggr).}
\end{array}
\end{equation}
combined with \eqref{paso2.1}, this implies \eqref{e.Carleman}, completing the proof.\end{proof} 
\section{Null controllability of semilinear degenerate parabolic equations}
In this section we prove our main result Theorem \ref{T1}. First, as consequence of Carleman inequality, we give a sketch of the proof  Theorem \ref{T2}. As a consequence of this result, we state the null controllability of linear  degenerate parabolic equations but we do not give its prove since nowadays it is classical. See e.g. \cite{Zab},\cite{FI}, \cite{yamamoto} for classical results. Finally we conclude the section with the non linear result.  

 \begin{proof}[Sketch of the proof of Theorem \ref{T2}]
 To prove Theorem \ref{T2} we proceed as in \cite{ACF}, pp.192-195. That is, 
we multiply $v_t +  (a(x)v_{x})_{x}  - b(x,t)v + (\beta(x) c(x,t)v)_{x}   =0$ by  $v$ and we integrate over $(0,1)$. We got that, 
\[ \begin{array}{ll}
0= {\displaystyle \int_{0}^ {1} [ v_t +  (a(x)v_{x})_{x}  - b(x,t)v + (\beta(x)c(x,t)v)_{x}]v dx}\\
={\displaystyle \; \frac{d}{dt}\int_{0}^ {1} v^{2} dx - \int_{0}^ {1} a(x) v_{x}^{2} dx - \int_{0}^ {1} b(x,t) v^{2} dx - \int_{0}^ {1} \beta(x) c(x,t) vv_{x} dx.}
\end{array} \]

Observe that, for  $x\in [0, 1]$, $a(x) \geq a(1)C_{\beta} \beta^{2}(x)$ for some constant$C_{\beta}>0$ as a consequence of~\eqref{condicion.beta}. Then,
\[ \begin{array}{ll}
 {\displaystyle \int_{0}^ {1} a(x) v_{x}^{2} dx = \frac{d}{dt}\int_{0}^ {1} v^{2} dx - \int_{0}^ {1} b(x,t) v^{2} dx
  - \int_{0}^ {1} (2 a(1)C_{\beta} \beta^{2}(x))^{\frac{1}{2}} \frac{c(x,t)}{[2a(1)C_{\beta}]^{\frac{1}{2}}} vv_{x} dx}\\
  \leq \; {\displaystyle \frac{d}{dt}\int_{0}^ {1} v^{2} dx + ||b||_{\infty}\int_{0}^ {1}  v^{2} dx
  +\frac{||c||_{\infty}^{2}}{4a(1)C_{\beta}} \int_{0}^ {1}  v^{2} dx + \int_{0}^ {1} a(1)C_{\beta} \beta^{2}(x) v_{x}^{2} dx.}
\end{array} \]
As\'{\i},
\[ 0 \leq 2 \int_{0}^ {1} (a(x) -a(1)C_{\beta}\beta^{2}(x)) v_{x}^{2} dx \leq \frac{d}{dt} \int_{0}^ {1}v^{2} dx + \biggl(2||b||_{\infty} + \frac{||c||_{\infty}^{2}}{2a(1)C_{\beta}} \biggr) \int_{0}^ {1}v^{2}dx,\]
and
\[ 0 \leq e^{(2||b||_{\infty} + \frac{||c||_{\infty}^{2}}{2a(1)C_{\beta}})t} \biggl[ \frac{d}{dt} \int_{0}^ {1}v^{2} dx + \biggl(2||b||_{\infty} + \frac{||c||_{\infty}^{2}}{2a(1)C_{\beta}} \biggr) \int_{0}^ {1}v^{2}dx\biggr].\]
Therefore, for every $t \in [0,T]$,
\[ 0 \leq \frac{d\biggl[e^{(2||b||_{\infty} + \frac{\|c\|_{\infty}^{2}}{2a(1)C_{\beta}})t}  \int_{0}^ {1}v^{2}(x,t) dx \biggr]}{dt};\]
That means, that for a constant  $C>0$,
\[ \int_{0}^{1} v^{2}(x,0)dx \leq C \int_{0}^{1} v^{2}(x,t)dt, \quad \forall\; t \in [0,T]. \] 

As in \cite{ACF} it is needed to take two cases,  $K\neq 1$ and  $K=1$, to get\[ \int_{0}^{1} v^{2}(x,t)dx  \leq C\int_{0}^{1} a(x) v_{x}^{2}(x,t) dx \quad \forall \; t \in [0,T].\]
Then, for every $t \in [0,T]$
\[ \int_{0}^{1} v^{2}(x,0)dx  \leq C\int_{0}^{1} a(x) v_{x}^{2}(x,t) dx.\]
As a consequence, integrating over  $[T/4, 3T/4]$ and using Lemma~\ref{ineq.aux.Carleman}, we have   
\[
\begin{array}{ll}
{\displaystyle \int_{0}^{1} v^{2}(x,0)dx} & \leq  {\displaystyle C \int_{T/4}^{3T/4}\!\!\! \int_{0}^{1} a(x) v_{x}^{2}(x,t) dxdt}\\
& \leq  C {\displaystyle \int_{T/4}^{3T/4}\!\!\! \int_{0}^{1} s\theta e^{2s\varphi(x,t)} a(x) v_{x}^{2}(x,t) dxdt}\\
 &\leq  C{\displaystyle  \int_{0}^{T}\!\!\! \int_\omega  v^{2} dxdt.}
\end{array}
\]
This concludes the proof of Theorem~\ref{T2}.
\end{proof}
\medskip

Observe that  the observability inequality  implies the  null controllability of system:
\begin{equation}\label{e1-1-1}
    \left\{ \begin{array}{ll} y_t-  (a(x)y_{x})_{x}  + b(x,t)y + \beta(x)c(x,t)y_{x}   =h1_{\omega} & \mbox{in}\;\;Q=(0,1)\times (0,T) , \\
      y(0,t)=y(1,t)=0   & t \in (0,T),\\
    y(x,0)= y_{0}(x), & \mbox{in} \;\; (0,1).
\end{array} \right.
\end{equation}
 In fact, by a minimization argument, it can be proven that

\begin{theorem}\label{tcontrol}
Given $T>0$ and $y_{0} \in L^2(0,1)$, it exists $h \in
L^2(\omega \times (0,T))$ such that the solution   $y$ corresponding to~\eqref{e1-1-1} satisfies
\[
y(x, T) = 0 \;\;\mbox{for almost every}\;\; x \in [0,1].
\] Moreover, it exists a positive constant $C$, only depending on  $T$, such that
\[ \int_{0}^T \!\!\!\int_{\omega}|h|^2 dx dt \leq C \int_{0}^1 y_{0}^2(x)
dx.
\]
\end{theorem}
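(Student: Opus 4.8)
The plan is to deduce Theorem~\ref{tcontrol} from the observability inequality \eqref{observability} of Theorem~\ref{T2} by the classical duality (penalized Hilbert Uniqueness Method) argument; the penalization is needed precisely because \eqref{observability} controls only $\|v(0)\|_{L^2(0,1)}$ and not the full norm of the adjoint final datum. Fix $y_0\in L^2(0,1)$ and, for each $\varepsilon>0$, introduce the functional $J_\varepsilon\colon L^2(0,1)\to\R$ given by
\[
J_\varepsilon(v_T)=\frac12\int_0^T\!\!\int_\omega |v|^2\,dxdt+\varepsilon\|v_T\|_{L^2(0,1)}+\int_0^1 y_0(x)\,v(x,0)\,dx,
\]
where $v$ denotes the solution of the adjoint system \eqref{adjunto1} with final datum $v(\cdot,T)=v_T$. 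By Theorem~\ref{t.regularidad} (applied to the backward equation \eqref{adjunto1}) the map $v_T\mapsto v$ is linear and continuous, so $J_\varepsilon$ is continuous and convex.

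First I would establish that $J_\varepsilon$ admits a minimizer. Writing $|\int_0^1 y_0\,v(0)|\le \|y_0\|_{L^2}\,\|v(0)\|_{L^2}$ and bounding $\|v(0)\|_{L^2}$ by the observation through \eqref{observability}, one gets
\[
J_\varepsilon(v_T)\ge \tfrac12\Big(\int_0^T\!\!\int_\omega |v|^2\,dxdt\Big)-C\|y_0\|_{L^2}\Big(\int_0^T\!\!\int_\omega |v|^2\,dxdt\Big)^{1/2}+\varepsilon\|v_T\|_{L^2},
\]
so the first two terms are bounded below while $\varepsilon\|v_T\|_{L^2}\to\infty$ as $\|v_T\|_{L^2}\to\infty$; hence $J_\varepsilon$ is coercive and, being convex and continuous, attains a minimum at some $\hat v_T^{\,\varepsilon}$. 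Let $v^\varepsilon$ be the corresponding adjoint state and set $h_\varepsilon=v^\varepsilon 1_\omega$. Writing the Euler--Lagrange equation for $J_\varepsilon$ and pairing the controlled equation \eqref{e1-1-1} with an arbitrary adjoint solution $w$ (all boundary and interior terms cancel by the adjointness of \eqref{e1-1-1} and \eqref{adjunto1}), I would identify the associated state $y_\varepsilon$ of \eqref{e1-1-1} driven by $h_\varepsilon$ and obtain $\|y_\varepsilon(T)\|_{L^2}\le\varepsilon$, coming from the subdifferential of the penalization term.

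Finally, from $J_\varepsilon(\hat v_T^{\,\varepsilon})\le J_\varepsilon(0)=0$ and \eqref{observability} one gets the uniform bound $\int_0^T\!\!\int_\omega|h_\varepsilon|^2\,dxdt\le 4C^2\int_0^1 y_0^2\,dx$, independent of $\varepsilon$. I would then extract a subsequence with $h_\varepsilon\rightharpoonup h$ weakly in $L^2(\omega\times(0,T))$, pass to the limit in the linear well-posed problem \eqref{e1-1-1} using the continuous dependence of Theorem~\ref{t.regularidad}, and conclude $y(\cdot,T)=0$ (since $\|y_\varepsilon(T)\|\le\varepsilon\to0$) together with the announced cost estimate by weak lower semicontinuity of the $L^2$-norm. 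The only genuinely delicate point is the coercivity of the functional and the passage $\varepsilon\to0$: the observability inequality bounds $v(0)$ but not $v_T$, which is exactly why the penalty $\varepsilon\|v_T\|_{L^2}$ is introduced; once this is in place every remaining step is standard, so the result is classical as the authors remark.
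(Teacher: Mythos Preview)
Your proposal is correct and is precisely the classical minimization (penalized HUM) argument that the paper invokes: the authors do not write out a proof of Theorem~\ref{tcontrol}, saying only that ``by a minimization argument'' the observability inequality \eqref{observability} yields null controllability, and referring to \cite{Zab}, \cite{FI}, \cite{yamamoto}. Your sketch---the penalized functional $J_\varepsilon$, coercivity via \eqref{observability}, Euler--Lagrange giving an approximate control with $\|y_\varepsilon(T)\|\le\varepsilon$, and weak-limit passage---is exactly that standard route, so there is nothing to compare.
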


\begin{proof}[Proof of Theorem \ref{T1}]

 Let $y_{0} \in H_{a}^{1}(0,1)$. Let us assume that 
\[ g(x,t;\cdot), G (x,t; \cdot ) \in C^0(\R^{2})\quad \forall (x,t) \in Q.\]

We define  $Z = L^{2}(0,T; H_{a}^{1}(0,1))$. For every  $z \in Z$, we consider the linear system\begin{equation}\label{e-semi2}
    \left\{ \begin{array}{ll} y_t -  (a(x)y_{x})_{x} + g(x,t; z,z_{x})y + \beta(x)\biggl[{\displaystyle \frac{G(x,t;z,z_{x})}{\beta(x)}}\biggr]y_{x}    = h1_{\omega} & \mbox{en}\;\;Q , \\
    \noalign{\smallskip}
    y(1,t)=0  \quad 
    \mbox{ y } \left\{ \begin{array}{ll} y(0,t)=0 & \mbox{for (WDP)},\\
    (ay_{x})(0,t)=0 &\mbox{for (SDP)},
    \end{array}\right. & t \in (0,T),\\
    y(x,0)= y_{0}(x), & \mbox{in} \;\; (0,1),
\end{array} \right.
\end{equation}
We associate to  $z$ a family of controls $U(z)\subset L^{2}$ that drives the corresponding solution to \eqref{e-semi2} to zero.  Observe that ~\eqref{e-semi2} is of the form~\eqref{e1-1} with
\[ 
\left\{\begin{array}{ll} b=b_{z}= g(x,t; z,z_{x}) \in L^{\infty}(Q)\\
c=c_{z} ={\displaystyle \frac{G(x,t;z,z_{x})}{\beta(x)}} \in L^{\infty}(Q).
\end{array}\right.
\]
From Theorem~\ref{tcontrol}, we deduce the existence of a  control $\widehat{h}_{z} \in L^{2}(\omega \times (0,T))$ such that the solution to~\eqref{e-semi2} with $h = \widehat{h}_{z}$ satisfies
\[ \widehat{y}_{z} (x,T)= 0 \quad \mbox{in} \; (0,1)\]
and, furthermore
\begin{equation}\label{estimacion.h1}
\|\widehat{h}_{z}\|_{L^{2}(\omega \times (0,T))} \leq C \|y_{0}\|_{L^{2}(0,1)}.
\end{equation}
On the other hand, from Theorem~\ref{t.regularidad}, we obtain that
\[ \widehat{y}_{z} \in L^{2}(0,T; H_{a}^{1}(0,1)) \]
and
\begin{equation}\label{estimacion.y1}
\|\widehat{y}_{z} \|_{L^{2}(0,T; H_{a}^{1}(0,1))} \leq C (\|y_{0}\|_{H_{a}^{1}(0,1)} + \|\widehat{h}_{z} \|_{L^{2}(\omega \times (0,T))}).
\end{equation}
Estimates~\eqref{estimacion.h1} and~\eqref{estimacion.y1} can be writen as
\begin{equation}\label{estimacion.h2}
\|\widehat{h}_{z}\|_{L^{2}(\omega \times (0,T))} \leq C \|y_{0}\|_{L^{2}(0,1)}
\end{equation}
and
\begin{equation}\label{estimacion.y2}
\|\widehat{y}_{z} \|_{Z} \leq C \|y_{0}\|_{H_{a}^{1}(0,1)} .
\end{equation}
\medskip

Given $h \in L^{2}(\omega \times (0,T))$, let $y_{h} \in Z$ be the solution to~\eqref{e-semi2} in $Q$ with righthand side $h$ (to simplify notation we omit the dependence on  $z$). For every  $z \in Z$ we define
\[ U(z)= \{ h \in L^{2}(\omega \times (0,T)) : \; y_{h}(T)= 0, \quad \|h\|_{L^{2}(\omega \times (0,T))} \leq C\|y_{0}\|_{L^{2}(0,1)} \}\]
and
\[ \Lambda(z) =\{y_{h} : \; h \in U(z), \quad \|y_{h}|\|_{Z} \leq C \|y_{0}\|_{H_{a}^{1}(0,1)}\}.\]

In this way we introduced a multivalued application  
\[ z \mapsto \Lambda(z). \]

We will show that this application has a fixed point $y$. Of course, this will imply the existence of   $h \in L^{2}(\omega \times (0,T))$ such that~\eqref{e1}  has a solution that satisfies~\eqref{null}.

To this aim we use a version of  Kakutani's fixed point due to   Fau and Glicksberg (see~\cite{FaGl}) that can be applied to $\Lambda$. Firstly, from~\eqref{estimacion.h2} and~\eqref{estimacion.y2} we deduce that $\Lambda(z)$ is, for every $z \in Z$, a non empty set. Secondly, it is easy to check that  $\Lambda(z)$ is a uniformly bounded set, closed and convex in  $Z$. The regularity assumption on $y_0$ and Theorem~\ref{t.regularidad}, implies 
\[y \in  \mathcal{C}^{0}([0,T];H_{a}^1(0,1)) \cap L^2(0,T;
H_{a}^{2}(0,1))\cap H^{1}(0,T; L^{2}(0,1)),\]
and it exists a constant $C_{T}$ such that
\begin{equation}\label{regularidad-simon}
\begin{array}{ll}
 \|y\|_{L^2(0,T;H_{a}^{2}(0,1))} + \|y_{t}\|_{L^{1}(Q)}
\leq \;C_{T} ||y_{0}||_{H_{a}^1(0,1)} .
\end{array}
\end{equation}
(where $C_{T}$ is independent of $z$) for every $y \in \Lambda(z)$. Furthermore,  $H_{a}^{2}(0,1)$ is compactly imbebed in $H_{a}^{1}(0,1)$ (see, e.g.~\cite{ACF}). We can conclude that it exists a compact set $K \subset Z$ such that
\begin{equation}\label{simon}
\Lambda(z) \subset K \quad \forall \; z \in Z \;\mbox{(see~\cite{Simon}).}
\end{equation}
 
 Now we prove that  $z \mapsto \Lambda(z)$ is upper hemicontinuous, i.e.  the real function \[ 
  z \in Z \mapsto   \sup_{y \in \Lambda(z)} \langle\mu,y\rangle
 \]
is upper semi-continuous for every $\mu \in Z'$. In other words, we will check that
 \[ B_{\alpha, \mu} =\{ z \in Z: \sup_{y \in \Lambda(z)} \langle\mu,y\rangle \geq \alpha\} \]
is a closed set of $Z$ for every $\alpha \in \R$. To this end, take $\{ z_{n} \}$  a sequence in $B_{\alpha, \mu}$ such that \[ z_{n} \rightarrow z \quad \mbox{en} \quad Z.\]
We want to show that $z \in B_{\alpha, \mu}$. We know, that it exists a subsequence $\{ z_{n_{k}} \}$ such that
 \[ 
 z_{n_{k}}(x,t) \rightarrow z(x,t), \;\; \mbox{almost everywhere in}\;\; Q,
\]
and
\[\sqrt{a(x)}z_{n_{k},x}(x,t) \rightarrow \sqrt{a(x)}z_{x}(x,t),\;\; \mbox{almost everywhere in} \;\; Q.
\]
Then, from the continuity assumptions on  $g$ and $G$,  we got
 \[ 
g(x,t; z_{n_{k}},z_{n_{k},x}) \rightarrow g(x,t; z, z_{x}) \;\; \mbox{in}\;\; L^{\infty}(Q),
\]
and
\[ \frac{G(x,t; z_{n_{k}},z_{n_{k},x})}{\beta(x)} \rightarrow \frac{G(x,t; z, z_{x})}{\beta(x)} \;\; \mbox{in} \;\; L^{\infty}(Q).
\]

On the other hand, since all the sets $\Lambda(z_{n})$ are compact and satisfy~\eqref{simon}, we deduce that
\begin{equation}\label{cerrado}
 \alpha \leq  \sup_{y \in \Lambda(z_{n_{k}})} \langle \mu,y \rangle = \langle \mu,y_{n_{k}} \rangle
 \end{equation}
for some $y_{n_{k}} \in \Lambda(z_{n_{k}})$. From the definition of  $\Lambda(z_{n_{k}})$ and $U(z_{n_{k}})$, it exists $h_{n_{k}} \in L^{2}(\omega \times (0,T))$ such that
\[ y_{t,n_{k}} -  (a(x)y_{x,n_{k}})_{x} + g(x,t; z_{n_{k}},z_{x,n_{k}})y_{n_{k}} + \beta(x)\biggl[\frac{G(x,t;z_{n_{k}},z_{x,n_{k}})}{\beta(x)}\biggr]y_{x,n_{k}}    = h_{n_{k}}1_{\omega} \quad \mbox{in}\;\;Q.\]
Moreover,
\[
\|h_{n_{k}}\|_{L^{2}(\omega \times (0,T))} \leq C \|y_{0}\|_{L^{2}(0,1)}
\]
and
\[
\|y_{n_{k}} \|_{Z} \leq C \|y_{0}\|_{H_{a}^{1}(0,1)} .
\]
where $y_{n_{k}}$ (resp. $h_{n_{k}}$) is uniformly bounded in  $Z$ (resp. $L^{2}(\omega \times (0,T))$). Then , 
\[ y_{n_{k}} \rightarrow \widehat{y} \;\; \mbox{ strongly in }\;\; Z\]
and 
\[ h_{n_{k}} \rightarrow \widehat{h} \;\; \mbox{weakly in }\;\; L^{2}(\omega \times (0,T)).\]

It is not difficult to show that
\[
    \left\{ \begin{array}{ll} \widehat{y}_t -  (a(x)\widehat{y}_{x})_{x} + g(x,t; z,z_{x})\widehat{y} + \beta(x)\biggl[{\displaystyle \frac{G(x,t;z,z_{x})}{\beta(x)}}\biggr]\widehat{y}_{x}    = \widehat{h}1_{\omega} & \mbox{in}\;\;Q , \\
    \noalign{\smallskip}
    \widehat{y}(1,t)=0  \quad 
    \mbox{ and} \left\{ \begin{array}{ll} \widehat{y}(0,t)=0 & \mbox{for (WDP)},\\
    (a\widehat{y}_{x})(0,t)=0 &\mbox{for (SDP)},
    \end{array}\right. & t \in (0,T),\\
    \widehat{y}(x,0)= y_{0}(x), \quad \widehat{y}(x,T)=0 & \mbox{in} \;\; (0,1),
\end{array} \right.
\]
in the sense of distributions; that is $\widehat{v} \in U(z)$ and $\widehat{y} \in \Lambda(z)$. In consequence, we can take the limit in ~\eqref{cerrado} to deduce that
\[ \alpha \leq \langle \mu, \widehat{y} \rangle \leq \sup_{y \in \Lambda(z)} \langle \mu, y\rangle.\]
that means that, $z \in B_{\alpha,\mu}$. We obtain that  $z \mapsto \Lambda(z)$ is upper  hemi-continuous.

We assume now that  $g(x,t;\cdot)$ and  $G(x,t;\cdot)$ belong to $L^{\infty}(\R^{2})$. We introduce  the function $\rho(x,t;\cdot) \in C_{c}^{\infty}(\R^{2})$ such that $\rho(x,t;\cdot) \geq 0$ in $\R^{2}$, $\mbox{supp} \;\rho(x,t;\cdot) \subset \overline{B}(0,1)$ and
\[ \int \int_{\R^{2}} \rho(x,t;s,p)dsdp =1.\]
for every $(x,t) \in Q$. We consider the functions $\rho_{n}, g_{n}$ y $G_{n}$ ($n \geq 1$), with
\[ \rho_{n}(x,t; s,p) = \frac{1}{n^{2}}\rho(x,t;ns,np) \quad \forall \; (s,p) \in \R^{2},\]
and
\[ g_{n} = \rho_{n} \ast g, \quad G_{n} = \rho_{n} \ast G.\]
Then, it is not difficult to see that $g_{n}$ and $G_{n}$ satisfy:
\begin{enumerate}
\item[1.] $g_{n}(x,t;\cdot), G_{n}(x,t;\cdot) \in L^{\infty}(\R^{2})$   ($n\geq 1$).
\item[2.]  $g_{n}(x,t;\cdot) \rightarrow g(x,t;\cdot)$ and $G_{n}(x,t;\cdot) \rightarrow G(x,t;\cdot)$ uniformly in $\R^{2} $ for  $(x,t) \in Q$.
\end{enumerate}
\medskip

For every  $n$, we obtain a control $h_{n}\in L^{2}(\omega \times (0,T))$such that
\begin{equation}\label{casogeneral}
    \left\{ \begin{array}{ll} y_{t,n} -  (a(x)y_{x,n})_{x} + f(x,t,y_{n},y_{x,n}) = h_{n}1_{\omega} & \mbox{in}\;\;Q , \\
    \noalign{\smallskip}
    y_{n}(1,t)=0  \quad 
    \mbox{ and } \left\{ \begin{array}{ll} y_{n}(0,t)=0 & \mbox{for (WDP)},\\
    (ay_{x,n})(0,t)=0 &\mbox{for (SDP)},
    \end{array}\right. & t \in (0,T),\\
    y_{n}(x,0)= y_{0}(x), & \mbox{in} \;\; (0,1),
\end{array} \right.
\end{equation}
has a least one solution  $y_{n} \in Z$ that satisfies
\[ y_{n}(x,T) =0 \quad \mbox{en}  \quad (0,1),\]

\[
\|h_{n}\|_{L^{2}(\omega \times (0,T))} \leq C 
\quad \mbox{and} \quad \|y_{n} \|_{Z} \leq C \quad \forall\; n \geq 1 .
\]
We have that $y_{n} \in K$ for every $n\geq 1$, with $K$ a compact subset of $Z$. Therefore, we can assume that at least for a subsequence
\[ y_{n} \to y \;\; \mbox{ strongly in}\;\; Z,\]
\[ h_{n} \rightharpoonup h \;\; \mbox{weakly in}\;\; L^{2}(\omega \times (0,T)).\]
Passing to the limit in ~\eqref{casogeneral}, we obtain a control $h \in L^{2}(\omega \times (0,T))$ such that it exists a control $h\in L^2(Q)$ such that the corresponding solution a solution $y$  to~\eqref{e1} that satisfies~\eqref{null}. The regularizing effects of the degenerate parabolic equation allows to prove the result when $y_{0} \in L^{2}(0,1)$. In fact in this situation take $h=0$ in a time interval $(0, t_0)$ with $t_0<T$ and then control the equation in $(t_0, T)$,  with initial datum $y(t_0)$.\end{proof}

\end{document}